\documentclass{amsart}
\usepackage{epsf,amscd,amsmath,amssymb,amsxtra,abbrViro}
\usepackage{hyperref}
\usepackage[xdvi]{graphicx}

\theoremstyle{plain}
\newtheorem{Th}{\itshape Theorem}[section]

\newtheorem{lem}[Th]{\itshape Lemma}

\newtheorem{ATh}{\itshape Theorem}[subsection]

\newtheorem{Acor}[ATh]{\itshape Corollary}

\newtheorem{Alem}[ATh]{\itshape Lemma}

\begin{document}
\title[Twisted acyclicity of a circle and 
link signatures]{Twisted acyclicity of a circle\\
and  link signatures}
\author{OLEG VIRO}
\address{\noindent
Mathematics Department, Stony Brook University,  Stony Brook
NY 11794-3651, USA\newline
\phantom{aa'}Mathematical Institute, 
Fontanka 27, St.~Petersburg, 191023,  Russia.}
                                
\begin{abstract}
Homology of the circle with non-trivial local coefficients 
is trivial. From this well-known fact we deduce geometric 
corollaries concerning links of codimension two. In particular,
the Murasugi-Tristram signatures are extended to invariants of
links formed of arbitrary oriented closed codimension two 
submanifolds of an odd-dimensional sphere. The novelty is 
that the submanifolds are not assumed to be disjoint, but
are transversal to each other, and the signatures are parametrized
by points of the whole torus. Murasugi-Tristram inequalities 
and their generalizations are also extended to this setup. 
\end{abstract}
\maketitle

\section{Introduction}\label{s1} 


The goal of this paper is to simplify and generalize a part 
of classical link theory based on various signatures of links
(defined by Trotter \cite{Trot} Murasugi \cite{Mura1},\cite{Mura2}, 
Tristram \cite{Trist}, Levine \cite{Levine1} \cite{Levine2}, 
Smolinsky \cite{Smolinsky}, Florens \cite{Florens1} and 
Cimasoni and Florens \cite{CimaFlor}). 
This part is known for its relations to topology of 4-dimensional
manifolds, see \cite{Trist}, \cite{Viro1}, \cite{Viro2} \cite{Gilmer},
\cite{KaufTayl} 
and applications in 
topology of real algebraic curves \cite{Orevkov1}, \cite{Orevkov2} and 
\cite{Florens1}. 

Similarity of the signatures to the new invariants
\cite{Rasm}, \cite{OzsSz1}, 
which were defined in the new frameworks of link homology theories 
and had spectacular applications \cite{Rasm}, \cite{Livingst}, 
\cite{Shum} to problems on classical link cobordisms, gives a new 
reason to revisit the old theory.    

There are two ways to introduce the signatures: 
the original 3-dimensional, via Seifert surface and Seifert form, and
4-dimensional, via the intersection form of the cyclic coverings of
4-ball branched over surfaces. I believe, this paper clearly
demonstrates advantages of the latter, 4-dimensional approach, which
provides more conceptual definitions, easily working in the situations
hardly available for the Seifert form approach. 

In the generalization considered here the classical links are 
replaced by collections  of transversal to each other oriented 
submanifolds of codimension two.

Technically the work is based on a systematic use of twisted homology
and the intersection 
forms in the twisted homology. Only the simplest kinds of 
twisted homology is used, the one with coefficients in $\C$, see 
Appendix.   

\subsection{Twisted acyclicity of a circle}\label{s1.2} 
A key property of twisted homology, which makes the whole story
possible, is the following well-known fact, which I call 
{\sfit twisted acyclicity of a circle\/}:

{\it Twisted homology of a circle with coefficients in $\C$ and 
non-trivial monodromy vanishes.}

This implies that the twisted homology of this kind completely
ignores parts of the space formed by circles along which
the monodromy of the coefficient system is non-trivial 
(for precise and detailed formulation see Section \ref{sT.2}).

\subsection{How the acyclicity works}\label{s1.3} 
In particular, twisted acyclicity of a circle implies 
that the complement of a tubular neighborhood of a
link looks like a closed manifold, because the boundary, being 
fibered to circles, is invisible for the twisted homology. 

Moreover, the same holds true for a collection of pairwise 
transversal generically immersed closed manifolds of codimension 2 in 
arbitrary closed manifold, provided the monodromy around 
each manifold is non-trivial.
The twisted homology does not feel the intersection of the
submanifolds as a singularity. 

The complement of a cobordism between such immersed 
links looks (again, from the point of view of twisted 
homology) like a compact cobordism between closed manifolds.

This, together with classical results about signatures of manifolds 
and relations between twisted homology and
homology with constant coefficients, allows us to deal with
a link of codimension two as if it was a single closed manifold.

\subsection{Organization of the paper}\label{s1.4}
I cannot assume the twisted homology well-known to the reader,
and review the material related to it. Of course, the material on 
non-twisted homology is not reviewed. The review is
limited to a very special twisted homology, the one with 
complex coefficients.
More general twisted homology is not needed here.

The review is postponed to appendices. 
The reader somehow familiar with twisted homology may visit
this section when needed.  The experts are invited
to look through appendices, too. 

We begin in Section \ref{s2} with a detailed exposition restricted 
to the classical links. Section \ref{s3} is devoted to higher
dimensional generalization, including motivation for our choice of 
the objects. Section \ref{s4} is devoted to {\sl span inequalities\/}, 
that is,
restrictions on homology of submanifolds of the ball, which span 
a given link contained in the boundary of the ball. Section \ref{s5} 
is devoted to {\sl slice inequalities\/}, which are restrictions on
homology of a link with given transversal intersection with a sphere
of codimension one.  

\section{In the classical dimension}\label{s2}

\subsection{Classical knots and links.}\label{s2.1} 
Recall that a {\sfit classical knot\/} is a smooth simple closed 
curve in the 3-sphere $S^3$. This is how one usually defines 
classical knots. However it is not the curve per se that 
is really considered in the classical knot theory, but
rather its placement in $S^3$. Classical knots incarnate the
idea of knottedness: both the curve and $S^3$ are topologically 
standard, but the position of the curve in $S^3$ may be
arbitrarily complicated topologically.
Therefore a classical knot 
is rather a pair $(S^3,K)$, where $K$ is a smooth
submanifold of $S^3$ diffeomorphic to $S^1$.            

A {\sfit classical link\/} is a pair $(S^3,L)$, 
where $L$ is a smooth closed one-dimensional submanifold 
of $S^3$. If $L$ is connected, then this is a knot.  

\subsection{Twisted homology of a classical link 
exterior}\label{s2.2}
An {\sfit exterior\/} of a classical link $(S^3,L)$ is the complement 
of an open tubular neighborhood of $L$. This is a compact 3-manifold
with boundary. The boundary is the boundary of the tubular neighborhood
of $L$. Hence, this is the total space of a locally trivial fibration 
over $L$ with fiber $S^1$. An exterior $X(L)$ is a deformation retract
of the complement $S^3\sminus L$. It's a nice replacement of 
$S^3\sminus L$, because $\Int X(L)$ is homeomorphic to 
$S^3\sminus L$, but $X(L)$ is compact manifold and has a nice boundary.

If $L$ consists of $m$ connected components, $L=K_1\cup\dots\cup K_m$,
then by the Alexander duality $H_0(X(L))=\Z$, $H_1(X(L))=\Z^m$, 
$H_2(X(L))=\Z^{m-1}$ 
and $H_i(X(L))=0$ for $i\ne 0,1,2$. The group $H_1(X(L))$ is dual to 
$H_1(L)$ with respect to the Alexander linking pairing 
$H_1(L)\times H_1(X(L))\to\Z$.  Hence a basis of $H_1(L)$ defines a dual
basis in $H_1(X(L))$. An orientation of $L$ determines a basis
$[K_1]$, \dots, $[K_m]$ of $H_1(L)$, and the dual basis of $H_1(X(L))$, 
which is realized by meridians $M_1$, \dots, $M_m$ positively linked 
to $K_1$, \dots, $K_m$, respectively. (The meridians are fibers of a
tubular fibration $\p X(L)\to L$ over points chosen on the corresponding
components.)
 
Therefore, if $L$ is oriented, then 
a local coefficient system on $X(L)$ with fiber $\C$ is defined 
by an $m$-tuple of complex numbers $(\Gz_1,\dots,\Gz_m)$, the images 
under the monodromy homomorphism
$H_1(X(L))\to\C^\times$ of the generators $[M_1]$, \dots, $[M_m]$ of 
$H_1(X(L))$.

Thus for an oriented classical knot $L$ consisting of $m$ connected
components, local coefficient systems 
on $X(L)$ with fiber $\C$ are parametrized by $(\C^\times)^m$.

\subsection{Link signatures}\label{s2.3} 
Let $L=K_1\cup\dots\cup K_m\subset S^3$ be a classical
link, $\Gz_i\in\C$, $|\Gz_i|=1$,
$\Gz=(\Gz_1,\dots,\Gz_m)\in (S^1)^m$ and $\mu:H_1(S^3\sminus
L)\to\C^\times$ takes to $\Gz_i$ a meridian of $K_i$ positively 
linked with $K_i$.

Let $F_1,\dots F_m\subset D^4$ be smooth oriented surfaces transversal 
to each other with $\p F_i=F_i\cap\p D^4=K_i$. Extend the tubular 
neighborhood of $L$ involved in the construction of $X(L)$ to a 
collection of tubular neighborhoods $N_1$, \dots, $N_m$ of 
$F_1$, \dots, $F_m$, respectively. 

Without loss of generality we may choose $N_i$ in such a way that they 
would intersect each other in the simplest way. 
Namely, each connected 
component $B$ of $N_i\cap N_j$ would contain only one point of 
$F_i\cap F_j$ and no point of others $F_k$ 
and would consist of entire fibers of $N_i$ and $N_j$, 
so that the fibers define a structure of bi-disk $D^2\times D^2$ on $B$.

To achieve this, one has to make the fibers of the tubular fibration 
$N_i\to F_i$ at each intersection point of $F_i$ and $F_j$ coinciding 
with a disk in $F_j$ and then diminish all $N_i$ appropriately. 

Now let us extend $X(L)$ to $X(F)=D^4\sminus\cup_{i=1}^m\Int N_i$.
This is a compact 4-manifold. Its boundary contains $X(L)$, the rest of
it is a union of pieces of boundaries of $N_i$ with $i=1,\dots, m$. 
These pieces are fibered over the corresponding pieces of $F_i$ with
fiber $S^1$.

By the Alexander duality, the orientation of $F_i$ gives rise to a 
homomorphism $H_1(X(F))\to\Z$ that maps a homology class to its 
linking number with $F_i$.
These homomorphisms altogether determine a homomorphism 
$H_1(X(F))\to \Z^m$. For any $\Gz=(\Gz_1,\dots,\Gz_m)$, the composition
of this homomorphism with the homomorphism 
$$\Z^n\to(\C^\times)^m:
(n_1,\dots,n_m)\to(\Gz_1^{n_1},\dots,\Gz_m^{n_m})$$     
is a homomorphism $H_1(X(F))\to(\C^\times)^m$ extending $\mu$.
 If each $F_i$ has no closed connected components, then this extension 
is unique. 
Let us denote it by $\overline\mu$. 

According to \ref{sT.4.6},
in $H_2(X(F);\C_{\overline\mu})$ there is a
Hermitian intersection form. Denote its signature by $\Gs_{\Gz}(L)$.

\begin{Th}  $\Gs_{\Gz}(L)$ 
does not depend on $F_1,\dots,F_m$.
\end{Th}

\begin{proof} Any $F'_i$ with $\p F'_i=F'_i\cap\p
D^4=K_i$ is cobordant to $F_i$.
The cobordisms $W_i\subset D^4\times I$ can be made 
pairwise transversal.
They define a cobordism $D^4\times I\sminus\cup_i\Int N(W_i)$
between $X(F)$ and
$X(F')$.
By Theorem \ref{VanSign},  
$$\Gs_\Gz(\p D^4\times I\sminus\cup_i\Int N(W_i))=0.$$
The manifold $\p D^4\times I\sminus\cup_i\Int N(W_i)$ is the union of
$X(F)$, $-X(F')$ and a {\it
homologically negligible\/} part 
$\p (N(\cup_i\Int W_i))$, 
the boundary of a regular neighborhood
of the cobordism $\cup_iW_i$ between $\cup_iF_i$ and $\cup_iF'_i$.
By Theorem \ref{AddOfSign}, 
$$\Gs_\Gz(\p D^4\times I\sminus\cup_i\Int
N(W_i))=\Gs_\Gz(D^4\sminus\cup_iF_i)-\Gs_\Gz(D^4\sminus\cup_iF'_i)$$
Hence, 
$\Gs_\Gz(D^4\sminus\cup_iF_i)=\Gs_\Gz(D^4\sminus\cup_iF'_i)$.
\end{proof}

\subsection{Colored links}\label{s2.4}  In the definition of signature 
$\Gs_{\Gz}(L)$ above one needs to numerate the components $K_i$ of $L$
to associate to each of them the corresponding component $\Gz_i$ of $\Gz$,
but there is no need to require connectedness of each $K_i$. 
This leads to a notion of colored link.  

 An {\sl $m$-colored link} $L$ is an oriented 
link in $S^3$ together with a map (called {\sl coloring\/}) assigning 
to each connected component of $L$ a color in $\{1,\dots, m\}$. 
The sublink $L_i$ is constituted by the components of $L$ with color $i$ 
for $i=1,\dots, m$.

For an $m$-colored link $L=L_1\cup\dots\cup L_m$ and 
$\Gz=(\Gz_1,\dots,\Gz_m)\in (S^1)^m$, the signature $\Gs_\Gz(L)$ is
defined as above, but each component $K_j$ colored with color $i$ is 
associated to $\Gz_i$.   

\subsection{Relations to other link signatures}\label{s2.5}

If $\Gz_i=-1$ for all $i=1,\dots,m$, then the signature $\Gs_{\Gz}(L)$ 
coincides
with the Murasugi signature $\xi(L)$ introduced in \cite{Mura2}. 
If all $\Gz_i$ are roots of unity of a degree, which is a power of a prime
number and all linking numbers $\lk(L_i,L_j)$ vanish, then $\Gs_{\Gz}(L)$ 
coincides with the signature defined by
Florens \cite{Florens1}.

In the most general case, $\Gs_{\Gz}(L)$ coincides with the signature 
defined for arbitrary $\Gz$ by Cimasoni and Florens \cite{CimaFlor}
using a 3-dimensional approach, with a version of Seifert surface, 
$C$-complex.   
                           
\section{In higher dimensions}\label{s3}

\subsection{Apology for the generalization of higher dimensional 
links}\label{s3.1} 

There is a spectrum of objects
considered as generalizations of classical knots and links.
The closest generalization of classical knots
are pairs $(S^n,K)$, where $K$ is a smooth submanifold
diffeomorphic to $S^{n-2}$. Then the requirements on $K$
are weakened. Say, one may require $K$ to be only homeomorphic 
to $S^{n-2}$, not diffeomorphic. Or just a homology sphere
of dimension $n-2$. The codimension is important in order to
keep any resemblance to classical knots.

In the same spirit, for the closest higher-dimensional 
counter-part of classical links one takes a pair consisting
of $S^n$ and a collection of its disjoint smooth
submanifolds diffeomorphic to $S^{n-2}$. One allows
to weaken the restrictions on the submanifolds. Up to arbitrary
closed submanifolds. 
\medskip

{\bfit I suggest to allow transversal intersections of the 
submanifolds.\/}
\medskip

Of course, the main excuse for this is that 
some results  can extended to this setup. 
Here is a couple of other reasons.

First, in the classical dimension, it is easy for submanifolds to be
disjoint. Generically curves in 3-sphere are disjoint. If 
they intersect, it is a miracle or, rather, has a special
cause. 

Generic submanifolds of codimension two in a manifold
of dimension $>3$ intersect. If they do {\sfit not\/} intersect, this
is a miracle, or consequence of a special cause. 

Second, classical links emerge naturally as links of 
singular points
of complex algebraic curves in $\C^2$. Recall that for an
algebraic curve $C\subset\C^2$ and a point $p\in C$ the
boundary of a sufficiently small ball $B$ centered at $p$, 
the link $(\p B,\p B\cap C)$ is  well-defined up to 
diffeomorphism, and it is called the {\sfit link of
$C$ at $p$}. 

An obvious generalization of this definition to an
algebraic hypersurface $C\subset\C^n$ gives rise to 
a pair $(S^{2n-1},K)$ with connected $K$. It cannot be
a union of {\it disjoint\/} submanifolds of $S^{2n-1}$.

It would not be difficult to extend the results of this paper 
to a more general setup. For example, one can replace the ambient
sphere with a homology sphere, or even more general manifold.
However, one should stop somewhere. The author prefers this early
point, because the level of generality accepted here suffices for
demonstrating the new opportunities open by a systematic usage of
twisted homology. On the other hand, further generalizations 
can make formulations more cumbersome.

\subsection{Colored links}\label{s3.2}
By an {\sl $m$-colored link of dimension\/} $n$ we shall mean a collection
of $m$ oriented smooth closed $n$-dimensional submanifolds $L_1$, \dots, 
$L_m$ of the sphere $S^{n+2}$ such that any sub-collection has 
transversal intersection. The latter means that for any 
$x\in L_{i_1}\cap\dots\cap L_{i_k}$ the tangent spaces $T_xL_{i_1}$, \dots,
$T_xL_{i_k}$ are transverse, that is, $\dim(T_xL_{i_1}\cap\dots\cap
T_xL_{i_k})=n+2-2k$. 

\subsection{Generic configurations of submanifolds}\label{s3.3}
More generally, an $m$-colo\-red configuration of transversal submanifolds
in a smooth manifold $M$ is a family of $m$  smooth
submanifolds $L_1$, \dots, $L_m$ of $M$ such that any sub-collection has
transversal intersection. If $M$ has a boundary, the submanifolds are
assumed to be transversal to the boundary, as well as the intersection of
any sub-collection. Furthermore, assume that $\p M\cap L_i=\p L_i$ for any
$i=1,\dots,m$.   

As above, in Section \ref{s2.3}, for any $m$-colored configuration $L$ 
of transversal submanifolds $L_1$, \dots, $L_m$ in $M$ one can find a
collection of their tubular neighborhoods $N_1$, \dots, $N_m$ which agree
with each other in the sense that for any sub-collection $L_{i_1}$, \dots,
$L_{i_\nu}$ the intersection of the corresponding neighborhoods 
$N_{i_1}\cap\dots\cap N_{i_\nu}$ is neighborhood of the intersection 
$L_{i_1}\cap\dots\cap L_{i_\nu}$ fibered over this intersection with 
the corresponding poly-disk fiber. 

Denote the complement $M\sminus\cup_{i=1}^m\Int N_i$ by $X(L)$ and call it
an {\sfit exterior\/} of $L$. This is a smooth manifold with a system of
corners on the boundary.  The differential type of the exterior does
not depend on the choice of neighborhoods. Moreover, one can eliminate the
choice of neighborhoods and deleting of them from the definition. Instead,
one can make a sort of real blowing up of $M$ along $L_1$, \dots, $L_m$.
However, for the purposes of this paper it is easier to stay with the
choices. 
 
\subsection{Link signatures}\label{s3.4}
Let $L=L_1\cup\dots\cup L_m$ be an $m$-colored link of dimension $2n-1$ in
$S^{2n+1}$. 

As well known (see, e.g., \cite{Levine1}), for each oriented closed 
codimension 2 submanifold $K$ of $S^{2n+1}$ 
there exists an oriented smooth compact submanifold 
$F$ of $D^{2n+2}$ such that $\p F=K$. Choose for each $L_i$ such a
submanifold of $D^{2n+2}$, denote it by $F_i$, and make all the $F_i$ 
transversal to each other by small perturbations. 

As a union of $m$-colored transversal submanifolds of $D^{2n+2}$,
$F= F_1\cup \dots\cup F_m$ has an exterior $X(F)$. 
By the Alexander duality, $H^1(X(F);\C^\times)$ is naturally isomorphic to
$H_{2n}(F,L;\C^\times)$. Let $\Gz=(\Gz_1,\dots,\Gz_m)\in(S^1)^m$.
Take $\sum_{i=1}^m\Gz_i[F_i]\in H_{2n}(F,L;\C^\times)$ and denote
by $\mu$ the Alexander dual cohomology class considered as a homomorphism
$H_1(X(F))\to\C^\times$. Denote by $\C_\mu$ the local coefficient system 
on $X(F)$ corresponding to $\mu$.

According to \ref{sT.4.6}, in $H_{n+1}(X(F);\C_\mu)$ there is an 
intersection form, which is Hermitian, if $n$ is odd, and skew-Hermitian,
if $n$ is even. Denote its signature by $\Gs_\Gz(L)$.

\begin{Th}  $\Gs_{\Gz}(L)$ 
does not depend on $F_1,\dots,F_m$.
\end{Th}

\begin{proof} Any $F'_i$ with $\p F'_i=F'_i\cap\p
D^{2n+2}=L_i$ is cobordant to $F_i$.
The cobordisms $W_i\subset D^{2n+2}\times I$ can be made 
pairwise transversal to form $m$-colored configuration $W$ of 
transversal submanifolds of $D^{2n+2}\times I$.
They define a cobordism $X(W)$
between $X(F)$ and
$X(F')$.
By Theorem \ref{VanSign},  
$$\Gs_\Gz(\p X(W))=0.$$
The manifold $\p X(W)=\p D^{2n+2}\times I\sminus\cup_i\Int N(W_i)$ 
is the union of
$X(F)$, $-X(F')$ and a {\it
homologically negligible\/} part 
$\p (N(\cup_i\Int W_i))$, 
the boundary of a regular neighborhood
of the cobordism $\cup_iW_i$ between $F$ and $F'$.
By Theorem \ref{AddOfSign}, 
$$\Gs_\Gz(\p X(W))=\Gs_\Gz(X(F))-\Gs_\Gz(X(F'))$$
Hence, 
$\Gs_\Gz(X(F))=\Gs_\Gz(X(F'))$.
\end{proof}

\section{Span inequalities}\label{s4} 
Let $L=L_1\cup\dots,\cup L_m$ be an $m$-colored link of dimension $2n-1$ 
in $S^{2n+1}$. Let $F=F_1\cup\dots\cup F_m$ be an $m$-colored 
configuration of transversal oriented compact $2n$-dimensional 
submanifolds of $D^{2n+2}$ with $\p F_i=F_i\cap\p D^{2n+2}=L_i$. 
In this section we consider restrictions on homological
characteristics of $F$ in terms of invariants of $L$.

\subsection{History}\label{s4.1}
The first restrictions of this sort were found by Murasugi \cite{Mura1} 
and Tristram \cite{Trist} for classical (1-colored) links. To $m$-colored
classical links and pairwise disjoint surfaces $F_i$ the Murasugi-Tristram
inequalities were generalized by Florens \cite{Florens1}. A further
generalization to $m$-colored classical links and intersecting $F_i$
was found by Cimasoni and Florens \cite{CimaFlor}. Higher dimensional
generalizations for $1$-colored links were found by the author
\cite{Viro2}, \cite{Viro3}.

\subsection{No-nullity span inequalities}\label{s4.2}
The most general results in this direction are quite cumbersome. 
Therefore, let me start with weak but simple ones.

Recall that $\Gs_\Gz(L)$ can be obtained from $F$: for an appropriate 
local coefficient system $\C_\mu$ on $X(F)$, this is the signature
of a Hermitian intersection form defined in $H_{n+1}(X(F);\C_\mu)$.
The signature of an Hermitian form cannot be greater than the 
dimension of the underlying space. In particular, 
\begin{equation}\label{ineq:S=<D}
|\Gs_\Gz(L)|\le\dim_\C H_{n+1}(X(F);\C_\mu).
\end{equation}

This can be considered as a restriction on a homological
characteristic of $F$ in terms of invariants of $L$. However, 
$\dim_\C H_{n+1}(X(F);\C_\mu)$ is not a convenient characteristic of $F$.
It can be estimated in terms of more convenient ones.

Let $\Gz=(\Gz_1,\dots,\Gz_m)\in (S^1)^m$. 
Let $p_1,\dots,p_k\in\Z[t_1,t_1^{-1}\dots,t_m,t_m^{-1}]$ be 
generators of the ideal of relations 
satisfied by complex numbers $\Gz_i$. Let $d$ be the
greatest common divisor of the integers 
$p_1(1,\dots,1)$, \dots, $p_k(1,\dots,1)$, if at least one of these
integers does not vanish, and zero otherwise. Cf. \ref{sT.3.6}. Let 
$$
P=\begin{cases} \Z/p\Z, &\text{ if }d>1\text{ and }p\text{ is a prime
divisor of }d\\
\Q, &\text{ if }d=0 
\end{cases}
$$

By \ref{EstTwHom}, 
$$\dim_\C H_{n+1}(X(F);\C_\mu)\le
\dim_{P} H_{n+1}(X(F);P).
$$
The advantage of passing to homology with non-twisted coefficients is 
that we can use the Alexander duality: \begin{multline*}
H_{n+1}(X(F);P)=H_{n+1}(D^{2n+2}\sminus F;P)\\=
H^{n+1}(D^{2n+2},\p D^{2n+2}\cup F;P)\\=
H^{n}(\p D^{2n+2}\cup F;P)=
H^n(F,L;P).
\end{multline*}
Hence,
$$|\Gs_\Gz(L)|\le \dim_{P}H_n(F,L;P).
$$ 

\subsection{General span inequalities}\label{s4.3}
The inequality \eqref{ineq:S=<D} can be improved. Indeed, the manifold 
$X(F)$ has a non-empty boundary. Therefore, its intersection form may be
degenerate and the right hand side of \eqref{ineq:S=<D} may be replaced by
a smaller quantity, the rank of the form. The rank is known to be
the rank of the homomorphism 
$H_{n+1}(X(F);\C_\mu)\to H_{n+1}(X(F),\p X(F);\C_\mu)$. 
Let us estimate this rank.

\begin{lem}\label{Lemma1Th7}
For any exact sequence $\dots\overset{\rho_{k+1}}\to C_k\overset{\rho_k}\to
 C_{k-1}\overset{\rho_{k-1}}\to\dots$ of vector spaces
and any integers $n$ and $r$ 
\begin{equation}\label{eqL1Th7} \rnk(\rho_{n+1})+\rnk(\rho_{n-2r})
=\sum_{s=0}^{2r}(-1)^s\dim C_{n-s}
\end{equation}
\end{lem}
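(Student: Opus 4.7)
The plan is to reduce the identity to a simple telescoping sum built from the rank-nullity theorem combined with exactness. The single observation driving everything is that in an exact sequence of vector spaces, $\dim C_k = \dim\ker\rho_k + \rnk\rho_k$, and by exactness $\dim\ker\rho_k = \rnk\rho_{k+1}$, so
\begin{equation*}
\dim C_k = \rnk\rho_{k+1} + \rnk\rho_k.
\end{equation*}

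The steps, in order, would be: first, record the displayed identity above (a one-line consequence of rank-nullity plus exactness). Second, substitute it into the right-hand side of \eqref{eqL1Th7} and write, with $a_j := \rnk\rho_j$,
\begin{equation*}
\sum_{s=0}^{2r}(-1)^s \dim C_{n-s} = \sum_{s=0}^{2r}(-1)^s \bigl(a_{n-s+1} + a_{n-s}\bigr).
\end{equation*}
Third, observe that this sum telescopes: grouping the terms as $(a_{n+1}+a_n) - (a_n+a_{n-1}) + (a_{n-1}+a_{n-2}) - \cdots + (a_{n-2r+1}+a_{n-2r})$, every interior $a_j$ appears once with each sign, so it cancels. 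Only the extreme terms $a_{n+1}$ and $a_{n-2r}$ survive (the latter with a plus sign because the last summand, at $s=2r$, is positive), yielding $\rnk\rho_{n+1}+\rnk\rho_{n-2r}$.

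There is no real obstacle here; the only thing to be slightly careful about is the sign at the end of the telescoping. Since $2r$ is even, the final block $(-1)^{2r}(a_{n-2r+1}+a_{n-2r})$ contributes $+a_{n-2r}$, which matches the desired right-hand side; were the upper limit odd the formula would come out with a minus and wouldn't be a sum of ranks, which is precisely why the lemma restricts to an even length $2r$.
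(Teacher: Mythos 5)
Your proof is correct. The paper reaches the same identity by a slightly different packaging: it truncates the exact sequence at both ends, obtaining the finite exact sequence
\begin{equation*}
0\to\Im\rho_{n+1}\hookrightarrow C_n\overset{\rho_{n}}\to
C_{n-1}\to\dots\overset{\rho_{n-2r+1}}\to C_{n-2r}\to\Im\rho_{n-2r}\to0,
\end{equation*}
and then invokes the fact that the Euler characteristic of a finite exact sequence vanishes; that Euler characteristic is exactly the difference between the two sides of \eqref{eqL1Th7}. Your argument instead starts from the pointwise identity $\dim C_k=\rnk\rho_{k+1}+\rnk\rho_k$ (rank--nullity plus exactness) and telescopes the alternating sum directly. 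The two are equivalent in substance --- your telescoping is precisely what proves the Euler-characteristic fact the paper cites --- but yours is more self-contained and makes the role of the even length $2r$ (so the boundary term survives with a $+$ sign) explicit, while the paper's is shorter because it defers to a standard result. Either presentation is acceptable.
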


\begin{proof}The Euler characteristic of the exact sequence
$$
0\to\Im\rho_{n+1}\hookrightarrow C_n\overset{\rho_{n}}\to
C_{n-1}\to\dots\overset{\rho_{n-2r+1}}\to C_{n-2r}\to\Im\rho_{n-2r}\to0
$$ 
is the difference between the left and right hand sides of \eqref{eqL1Th7}.
On the other hand, it vanishes, as the Euler characteristic of an exact
sequence. 
\end{proof}

\begin{lem}\label{Lemma2Th7}
Let $X$ be a topological space, $A$ its subspace, $\xi$ a local coefficient
system on $X$ with fiber $\C$. Then for any natural $n$ and $r\le\frac{n}2$
\begin{multline}
\rnk(H_{n+1}(X;\xi)\to H_{n+1}(X,A;\xi))+
 \rnk(H_{n-2r}(X;\xi)\to H_{n-2r}(X,A;\xi))\\
\\
=\sum_{s=0}^{2r}(-1)^sb_{n+1-s}(X,A)
-\sum_{s=0}^{2r}(-1)^sb_{n-s}(A)
+\sum_{s=0}^{2r}(-1)^sb_{n-s}(X)
\end{multline} 
where $b_k(*)=\dim_\C H_k(*;\xi)$
\end{lem}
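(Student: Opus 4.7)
The plan is to apply Lemma \ref{Lemma1Th7} to the long exact sequence of the pair $(X,A)$ with coefficients in $\xi$,
\[
\cdots \to H_k(A;\xi) \to H_k(X;\xi) \to H_k(X,A;\xi) \xrightarrow{\partial} H_{k-1}(A;\xi) \to \cdots,
\]
re-indexed linearly as a single exact sequence $\cdots \to C_{N+1} \xrightarrow{\rho_{N+1}} C_N \to C_{N-1} \to \cdots$ of $\C$-vector spaces. I would fix the indexing so that $\rho_{N+1}$ is the map $H_{n+1}(X;\xi)\to H_{n+1}(X,A;\xi)$; the three-periodic pattern of the pair sequence then gives, for every $k\ge 0$,
\[
C_{N-3k}=H_{n+1-k}(X,A;\xi),\ \ C_{N-3k-1}=H_{n-k}(A;\xi),\ \ C_{N-3k-2}=H_{n-k}(X;\xi).
\]

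Under this indexing, the second map appearing in the statement, $H_{n-2r}(X;\xi)\to H_{n-2r}(X,A;\xi)$, is $\rho_{N-6r-2}$ (take $k=2r$ in the third formula above). Applying Lemma \ref{Lemma1Th7} with $2R=6r+2$ therefore yields
\[
\rnk(\rho_{N+1})+\rnk(\rho_{N-6r-2})=\sum_{s=0}^{6r+2}(-1)^s\dim C_{N-s}.
\]
I would then split the sum on the right according to the residue of $s$ modulo $3$. Using the identity $(-1)^{3k}=(-1)^k$, the terms with $s=3k$ contribute $+\sum_{k=0}^{2r}(-1)^k b_{n+1-k}(X,A)$, those with $s=3k+1$ contribute $-\sum_{k=0}^{2r}(-1)^k b_{n-k}(A)$, and those with $s=3k+2$ contribute $+\sum_{k=0}^{2r}(-1)^k b_{n-k}(X)$. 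Summing these three partial sums reproduces precisely the right-hand side of the lemma.

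The hypothesis $r\le n/2$ ensures $n-2r\ge 0$, so all the homology groups involved are defined. There is no conceptual obstacle; the one step requiring genuine care is the index bookkeeping, namely verifying that the cutoff $s=6r+2$ dictated by Lemma \ref{Lemma1Th7} produces exactly $2r+1$ terms in each of the three residue classes, with the signs $+,-,+$ respectively. That is the step I would double-check most carefully.
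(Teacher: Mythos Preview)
Your proof is correct and is exactly the approach the paper takes: the paper's own proof is the single sentence ``Apply Lemma~\ref{Lemma1Th7} to the homology sequence of pair $(X,A)$ with coefficients in $\xi$,'' and you have simply written out the index bookkeeping that this sentence leaves to the reader. Your verification that the cutoff $s=6r+2$ yields $2r+1$ terms in each residue class with signs $+,-,+$ is accurate.
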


\begin{proof}
Apply Lemma \ref{Lemma1Th7} to the homology sequence of pair $(X,A)$ with
coefficients in $\xi$.
\end{proof}

\begin{Th}\label{Th7} For any integer $r$ with $0\le r\le\frac{n}2$\\   
\begin{multline}\label{ineq:Span}
|\Gs_{\Gz}(L)|+\sum_{s=0}^{2r}(-1)^s\dim_\C
  H_{n-s}(S^{2n+1}\sminus L;\C_\Gz)
\\  \le \sum_{s=0}^{2r}(-1)^s\dim H_{n+1+s}(F,L;P)
  +\sum_{s=0}^{2r}(-1)^s\dim H_{n+s}(F;P)
\end{multline}
\begin{multline}\label{ineq:Span2}
|\Gs_{\Gz}(L)|+\sum_{s=0}^{2r}(-1)^s\dim_\C
  H_{n+1+s}(S^{2n+1}\sminus L;\C_\Gz)
\\  \le \sum_{s=0}^{2r}(-1)^s\dim H_{n-s}(F,L;P)
  +\sum_{s=0}^{2r}(-1)^s\dim H_{n-s-1}(F;P)
\end{multline}
where $\Gz$ and $P$ are is in Section \ref{s4.2}
\end{Th}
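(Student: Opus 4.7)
The plan is to bound $|\Gs_\Gz(L)|$ by the rank of the intersection form on $H_{n+1}(X(F);\C_\mu)$, expand that rank via Lemma~\ref{Lemma2Th7}, and convert the resulting twisted Betti numbers into $P$-coefficient homology of $F$ and $(F,L)$ by combining twisted acyclicity of circles with Poincar\'e--Lefschetz duality and Alexander duality.

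Since the intersection form on $H_{n+1}(X(F);\C_\mu)$ factors through the canonical map $\alpha\colon H_{n+1}(X(F);\C_\mu)\to H_{n+1}(X(F),\p X(F);\C_\mu)$, its rank equals $\rnk\alpha$, and hence $|\Gs_\Gz(L)|\le\rnk\alpha$. I would apply Lemma~\ref{Lemma2Th7} to $(X(F),\p X(F))$ with $\xi=\C_\mu$ and discard the nonnegative companion term $\rnk(H_{n-2r}(X(F);\C_\mu)\to H_{n-2r}(X(F),\p X(F);\C_\mu))$ on the left, obtaining an upper bound for $|\Gs_\Gz(L)|$ in terms of three alternating sums of twisted Betti numbers, which I then reshape. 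Poincar\'e--Lefschetz duality (with $|\Gz_i|=1$ making $\C_\mu$ self-dual up to complex conjugation) identifies $\dim H_{n+1-s}(X(F),\p X(F);\C_\mu)=\dim H_{n+1+s}(X(F);\C_\mu)$. Twisted acyclicity of circles shows that the part of $\p X(F)$ complementary to $X(L)$, being fibered by circles on which $\mu$ is non-trivial, is $\C_\mu$-acyclic, so by Mayer--Vietoris $H_*(\p X(F);\C_\mu)\cong H_*(X(L);\C_\mu)\cong H_*(S^{2n+1}\sminus L;\C_\Gz)$; transposing this produces the $\sum(-1)^s\dim H_{n-s}(S^{2n+1}\sminus L;\C_\Gz)$ term on the LHS. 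Writing $\p_0X(F)$ for that circle-fibered complement, the same acyclicity together with the long exact sequence of the pair gives $H_k(X(F);\C_\mu)\cong H_k(X(F),\p_0X(F);\C_\mu)$ for every $k$.

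What remains is to bound the two surviving alternating sums of twisted Betti numbers by their $P$-coefficient counterparts and then apply Alexander duality: the computation of Section~\ref{s4.2} gives $\dim H_k(X(F);P)=\dim H_{2n+1-k}(F,L;P)$, while the analogous excision onto $(D^{2n+2},N(F))$ combined with contractibility of the disk gives $\dim H_k(X(F),\p_0X(F);P)=\dim H_{k-1}(F;P)$, producing exactly the $H_{n+1+s}(F,L;P)$ and $H_{n+s}(F;P)$ terms. The main obstacle is the passage from $\C_\mu$- to $P$-coefficients on the two alternating sums, since \ref{EstTwHom} as invoked in Section~\ref{s4.2} is only the term-wise inequality and does not automatically control alternating sums. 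The remedy is to lift the cellular chain complex of $X(F)$ (respectively of the pair $(X(F),\p_0X(F))$) to $\Lambda=\Z[t_1^{\pm1},\ldots,t_m^{\pm1}]$ and invoke Lemma~\ref{Lemma1Th7}: for any coefficient specialization, each alternating sum of Betti numbers over $2r+1$ consecutive degrees equals $\sum(-1)^s\dim C_{n-s}$ minus the ranks of the two endpoint differentials, so the difference between the $\C_\mu$-sum and the $P$-sum is exactly the rank-drop at the two endpoints. Semi-continuity of matrix rank under the specializations $\Lambda\to\C_\mu$ versus $\Lambda\to P$---the phenomenon that underlies \ref{EstTwHom} in the first place---forces this difference to be nonpositive, and \eqref{ineq:Span} follows.

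The companion inequality \eqref{ineq:Span2} is obtained by the dual version of the same argument: one represents the rank of the intersection form instead as the rank of the adjoint map $H_{n+1}(X(F),\p X(F);\C_\mu)\to H_{n+1}(X(F);\C_\mu)$ and applies Poincar\'e--Lefschetz duality the other way, which swaps the roles of the two alternating sums so that the ``lower'' ranges $H_{n-s}(F,L;P)$ and $H_{n-s-1}(F;P)$ appear on the RHS while the ``upper'' range $H_{n+1+s}(S^{2n+1}\sminus L;\C_\Gz)$ appears on the LHS.
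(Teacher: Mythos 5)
Your approach is essentially the paper's: bound $|\Gs_\Gz(L)|$ by the rank of the relativization map $H_{n+1}(X(F);\C_\mu)\to H_{n+1}(X(F),\p X(F);\C_\mu)$, expand that rank using Lemma~\ref{Lemma2Th7}, use circle acyclicity to replace $\p X(F)$-terms with $X(L)$-terms (giving the nullity-type sum), pass from twisted to $P$-coefficient Betti numbers, and finish with Alexander duality to land on homology of $F$ and $(F,L)$. The intermediate bookkeeping differs cosmetically --- the paper works directly with the pair $(X(F),X(L))$ and converts $H_{n+1-s}(X(F),X(L);P)$ to $H^{n+s}(F;P)$ by Alexander duality, while you first apply twisted Poincar\'e--Lefschetz to rewrite $\dim H_{n+1-s}(X(F),\p X(F);\C_\mu)$ as $\dim H_{n+1+s}(X(F);\C_\mu)$ and then route through the pair $(X(F),\p_0X(F))\cong(D^{2n+2},N(F))$ --- but both tracks produce the same $\dim H_{n+s}(F;P)$ terms. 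One small misreading worth flagging: your ``main obstacle'' is not actually an obstacle. Theorem~\ref{EstTwHom} is stated as an alternating-sum inequality, $\sum(-1)^{s-r}\dim_Q H_s(X;\C^\mu)\le\sum(-1)^{s-r}\dim_P H_s(X;P)$ (proved via Lemma~\ref{AlgLem}, which is the rank semi-continuity of specialized chain complexes you describe), so the re-derivation via Lemma~\ref{Lemma1Th7} that you propose is precisely what is already built into the paper's tool; the paper simply invokes it as is, extended in the obvious way to the relative chain complex of a pair. For \eqref{ineq:Span2} the paper reuses Lemma~\ref{Lemma2Th7} with degree parameters shifted by $2r+1$ (so the ``companion'' rank term now bounds the quantity of interest from the other side), which is cleaner and more explicit than your appeal to ``the adjoint map,'' though your sketch points in the right direction.
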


\begin{proof}As mentioned above, 
\begin{equation}\label{eq1PfTh7}
|\Gs_\Gz(L)|\le \rnk(H_{n+1}(X(F);\C_\mu)\to H_{n+1}(X(F),\p X(F);\C_\mu)).
\end{equation}
By Lemma \ref{Lemma2Th7}, 
\begin{multline}\label{eq2PfTh7}
\rnk(H_{n+1}(X(F);\C_\mu)\to H_{n+1}(X(F),\p X(F);\C_\mu))\\
\le \sum_{s=0}^{2r}(-1)^s\dim_\C H_{n+1-s}(X(F),X(L);\C_\Gz)
-\sum_{s=0}^{2r}(-1)^s\dim_\C H_{n-s}(X(L);\C_\Gz)\\
+\sum_{s=0}^{2r}(-1)^s\dim_\C H_{n-s}(X(F);\C_\Gz).
\end{multline}
Summing up these inequalities and moving one of the sums from the right
hand side to the left, we obtain:
\begin{multline}\label{eq3PfTh7}
|\Gs_\Gz(L)|+\sum_{s=0}^{2r}(-1)^s\dim_\C H_{n-s}(X(L);\C_\Gz)\\
\le \sum_{s=0}^{2r}(-1)^s\dim_\C H_{n+1-s}(X(F),X(L);\C_\Gz)
+\sum_{s=0}^{2r}(-1)^s\dim_\C H_{n-s}(X(F);\C_\Gz).
\end{multline}
The left hand sum of \eqref{eq3PfTh7} coincides with the left hand side
of \eqref{ineq:Span}. The right hand side can be estimated using 
Theorem \ref{EstTwHom}:
\begin{multline}\label{eq4PfTh7}
\sum_{s=0}^{2r}(-1)^s\dim_\C H_{n+1-s}(X(F),X(L);\C_\Gz)
+\sum_{s=0}^{2r}(-1)^s\dim_\C H_{n-s}(X(F);\C_\Gz)\\
\le\sum_{s=0}^{2r}(-1)^s\dim_P H_{n+1-s}(X(F),X(L);P)
+\sum_{s=0}^{2r}(-1)^s\dim_P H_{n-s}(X(F);P).
\end{multline}
Further, 
$$H_{n+1-s}(X(F),X(L);P)=H_{n+1-s}(D^{2n+2}\sminus F,S^{2n+1}\sminus L;P).
$$
By the Alexander duality,
$$ H_{n+1-s}(D^{2n+2}\sminus F,S^{2n+1}\sminus L;P)
  =
H^{n+1+s}(D^{2n+2},F;P).
$$
By exactness of the pair sequence, $H^{n+1+s}(D^{2n+2},F;P)=H^{n+s}(F;P)$.

Similarly,
\begin{multline*}
H_{n-s}(X(F);P)=H_{n-s}(D^{2n+2}\sminus F;P)\\
=H^{n+2+s}(D^{2n+2},F\cup S^{2n+1};P)\\
=H^{n+1+s}(S^{2n+1}\cup F;P)=H^{n+1+s}(F,L;P)
\end{multline*}
The last equality in this sequence holds true if $n+1+s<2n+1$, that is,
$s<n$. 

Since $P$ is a field, 
\begin{align}
\dim_P H^{n+s}(F;P)=&\dim_P H_{n+s}(F;P),\label{eq5PfTh7} \\
\dim_P H^{n+1+s}(F,L;P)=&\dim_P H_{n+1+s}(F,L;P)\label{eq6PfTh7}.
\end{align}

Combining formulas \eqref{eq5PfTh7}, \eqref{eq6PfTh7} with the calculations
above and equalities \eqref{eq4PfTh7} and \eqref{eq3PfTh7}, we obtain the
first desired inequalities \eqref{ineq:Span}.

The inequalities \eqref{ineq:Span2} are proved similarly. Namely,
by Lemma \ref{Lemma2Th7} 
\begin{multline}\label{eq7PfTh7}
\rnk(H_{n+1}(X(F);\C_\mu)\to H_{n+1}(X(F),\p X(F);\C_\mu))\\
\le \sum_{s=0}^{2r}(-1)^s\dim_\C H_{n+2+s}(X(F),X(L);\C_\Gz)
-\sum_{s=0}^{2r}(-1)^s\dim_\C H_{n+1+s}(X(L);\C_\Gz)\\
+\sum_{s=0}^{2r}(-1)^s\dim_\C H_{n+1+s}(X(F);\C_\Gz).
\end{multline}
Summing up inequalities \eqref{eq1PfTh7} and \eqref{eq7PfTh7} and 
moving one of the sums from the right
hand side to the left, we obtain:
\begin{multline}\label{eq8PfTh7}
|\Gs_\Gz(L)|+\sum_{s=0}^{2r}(-1)^s\dim_\C H_{n+1+s}(X(L);\C_\Gz)\\
\le \sum_{s=0}^{2r}(-1)^s\dim_\C H_{n+2+s}(X(F),X(L);\C_\Gz)
+\sum_{s=0}^{2r}(-1)^s\dim_\C H_{n+1+s}(X(F);\C_\Gz).
\end{multline}
After this the same estimates and transformations as in the proof of 
\eqref{ineq:Span} gives rise to \eqref{ineq:Span2}.
\end{proof}

\subsection{Nullities}\label{s4.4}

The sum in the left hand side of the inequalities \eqref{ineq:Span} is an
invariant of the link $L$. Its special case for classical links with $r=0$ 
is known as $\Gz$-nullity and appeared in the Murasugi-Tristram inequalities and their generalizations. 

Denote  $\sum_{s=0}^{2r}(-1)^s\dim
H_{n-s}(S^{2n+1}\sminus L;\C_\mu)$ by  $n^r_{\Gz}(L)$ and call it 
{\sfit $r$th $\Gz$-nullity of $L$\/}.

By the Poincar\'e duality (see \ref{sT.4.3}), 
$H_{n-s}(S^{2n+1}\sminus L;\C_\mu)$ is isomorphic to 
$H^{n+1+s}(S^{2n+1}\sminus L;\C_\mu)$. The latter vector space 
is dual to $H_{n+1+s}(S^{2n+1}\sminus L;\C_{\mu^{-1}})$ and anti-isomorphic
to  $H_{n+1+s}(S^{2n+1}\sminus L;\C_{\mu})$, see \ref{sT.4.5}. 
Therefore,
\begin{equation}\label{null}
n^r_{\Gz}(L)=\sum_{s=0}^{2r}(-1)^s\dim_\C H_{n+1+s}(S^{2n+1}\sminus
L;\C_\mu)
\end{equation}
and $n^r_\Gz(L)=n^r_{\overline{\Gz}}(L)$.
This sum is a part of the left hand side of \eqref{ineq:Span2}.

Now we can rewrite Theorem \ref{Th7} as follows:

\begin{Th}\label{Th8} For any integer $r$ with $0\le 2r\le n$
\begin{multline}\label{null2}
|\Gs_{\Gz}(L)|+n^r_\Gz(L)
 \\  \le \sum_{s=0}^{2r}(-1)^s\dim H_{n+s+1}(F,L;P)
 +\sum_{s=0}^{2r}(-1)^s\dim H_{n+s}(F;P)
\end{multline}

\begin{multline}\label{null3}
|\Gs_{\Gz}(L)|+n^r_\Gz(L)
 \\  \le \sum_{s=0}^{2r}(-1)^s\dim H_{n-s}(F,L;P)
 +\sum_{s=0}^{2r}(-1)^s\dim H_{n-s-1}(F;P)
\end{multline}
\end{Th}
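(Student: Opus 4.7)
The plan is to deduce Theorem \ref{Th8} directly from Theorem \ref{Th7}: the inequalities \eqref{null2} and \eqref{null3} are just \eqref{ineq:Span} and \eqref{ineq:Span2} after each left-hand summation is rewritten as $n^r_\Gz(L)$. The range condition $0\le 2r\le n$ in Theorem \ref{Th8} matches the hypothesis $r\le n/2$ of Theorem \ref{Th7}. The first sum is $n^r_\Gz(L)$ by definition; the only content is the identity \eqref{null}, which identifies the alternating sum on the left of \eqref{ineq:Span2} with $n^r_\Gz(L)$ as well.

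First I would verify \eqref{null} term by term, for each fixed $s$ with $0\le s\le 2r$. The complement $S^{2n+1}\sminus L$ deformation retracts onto the exterior $X(L)$, whose boundary is fibered in circles along which the monodromy of $\C_\mu$ is non-trivial and therefore has vanishing $\C_\mu$-homology by twisted acyclicity of a circle. Consequently, the twisted Poincar\'e--Lefschetz duality of \ref{sT.4.3} applied to the compact $(2n+1)$-manifold $X(L)$ simplifies to
$$
H_{n-s}(S^{2n+1}\sminus L;\C_\mu)\cong H^{n+1+s}(S^{2n+1}\sminus L;\C_\mu),
$$
with no relative boundary term appearing. Combining this with the universal-coefficient pairing between $\C_\mu$-cohomology and $\C_{\mu^{-1}}$-homology and the complex-conjugation anti-isomorphism between $\C_{\mu^{-1}}$- and $\C_\mu$-coefficients of \ref{sT.4.5} (applicable since $|\Gz_i|=1$ forces $\overline\mu=\mu^{-1}$), one obtains the term-by-term equality of dimensions
$$
\dim_\C H_{n-s}(S^{2n+1}\sminus L;\C_\mu)=\dim_\C H_{n+1+s}(S^{2n+1}\sminus L;\C_\mu),
$$
and taking the alternating sum over $s$ yields \eqref{null}.

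With \eqref{null} in hand, substituting into \eqref{ineq:Span} produces \eqref{null2}, and substituting into \eqref{ineq:Span2} produces \eqref{null3}; nothing further is needed. The only real obstacle is the symmetric form of twisted Poincar\'e duality on $X(L)$, which in turn depends on the twisted acyclicity of its $S^1$-fibered boundary; once that is granted, the rest is pure bookkeeping imported from Theorem \ref{Th7}.
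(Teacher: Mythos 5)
Your proof is correct and takes essentially the same route as the paper: Section \ref{s4.4} simply observes that the left-hand sums of \eqref{ineq:Span} and \eqref{ineq:Span2} both equal $n^r_\Gz(L)$ (the first by definition, the second via the duality chain \eqref{null}), and then restates Theorem \ref{Th7}. Your treatment of the duality is actually slightly more careful than the paper's, since you explicitly replace the open complement by the compact exterior $X(L)$, note that its $S^1$-fibered boundary is twisted-acyclic, and thereby justify dropping the relative term in Poincar\'e--Lefschetz duality, a step the paper leaves implicit.
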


If $F_i$ are pairwise disjoint, than the right hand sides of \eqref{null2}
and \eqref{null3} are equal due to Poincar\'e-Lefschetz duality
for $F$, but we do not assume that $F=\cup F_i$ is a manifold, and
therefore the inequalities \eqref{null2} and \eqref{null3} are not
equivalent and we have to keep both of them. 


\section{Slice inequalities}\label{s5} 
Again, as in the preceding section, let $L_1,\dots, L_m\subset S^{2n+1}$ 
be smooth  oriented transversal to each other submanifolds
constituting an $m$-colored link $L=L_1\cup\dots\cup L_m$ of 
dimension $2n-1$. 

Let $\GL_i\subset S^{2n+2}$ be oriented closed smooth submanifolds 
transversal to each other and to $S^{2n+1}$, 
with $\p \GL_i\cap S^{2n+1}=L_i$. In this section we consider restrictions
on homological characteristics of $\GL=\cup_{i=1}^m\GL_i$ in terms of 
invariants of link $L$. Of course, some results of this kind can be deduced
from the results of the preceding section, but an independent consideration
gives better results.

\subsection{No-nullity slice inequalities}\label{s5.1}
The most general results in this direction are quite cumbersome. 
Therefore, let me start with weak but simple ones.

We will use the same algebraic objects as in the preceding
section. In particular, $\Gz=(\Gz_1,\dots,\Gz_m)\in (S^1)^m$,
 $p_1,\dots,p_k\in\Z[t_1,t_1^{-1}\dots,t_m,t_m^{-1}]$ are
generators of the ideal of relations 
satisfied by complex numbers $\Gz_i$. Integer $d$ is the
greatest common divisor of the integers 
$p_1(1,\dots,1)$, \dots, $p_k(1,\dots,1)$, if at least one of them
does not vanish, and $d=0$ otherwise. Cf. \ref{s4.2} and \ref{sT.3.6}. 
Finally, 
$$
P=\begin{cases} \Z/p\Z, &\text{ if }d>1\text{ and }p\text{ is a prime
divisor of }d\\
\Q, &\text{ if }d=0 
\end{cases}
$$

Let $\mu:H_1(S^{2n+1}\sminus L)\to\C^\times$ be the homomorphism which
maps the meridian of $L_i$ to $\Gz_i$. 
The local coefficient system $\C_\mu$ on $S^{2n+1}\sminus L$ defined by
$\mu$  extends to $S^{2n+2}\sminus\GL$. 
We will denote the extension by the same symbol $\C_\mu$.

The sphere $S^{2n+1}$ bounds in $S^{2n+2}$ two balls, hemi-spheres 
$S^{2n+2}_+$ and $S^{2n+2}_-$ such that $\p S^{2n+2}_+=S^{2n+1}$ 
and $\p S^{2n+2}_-=-S^{2n+1}$ with the orientations inherited
from the standard orientation of $S^{2n+2}$. In
$H_{n+1}(S^{2n+2}\sminus\GL;\C_\mu)$ there is a (Hermitian or
skew-Hermitian) intersection form. Its signature is zero by Theorem
\ref{VanSign}, because $\GL$ bounds a configuration of pairwise 
transversal submanifolds $\GD=\GD_1\cup\dots\cup\GD_m$ in $D^{2n+3}$ 
and $\C_\mu$ 
extends over $D^{2n+3}\sminus\GD$. 

\begin{Th}\label{slice-small-Th} Under the assumption above,
\begin{equation}\label{ineq:Slice-easy}
2|\Gs_\Gz(L)|\le\dim_P H_n(\GL;P).
\end{equation}
\end{Th}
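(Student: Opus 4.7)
The plan is to realize $\Gs_\Gz(L)$ as the signature of the restriction of the ambient twisted intersection form on $X(\GL)$ to a subspace, and then to use the fact that this ambient form has signature zero to extract a factor of two.

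First I would split $S^{2n+2}$ along the equator $S^{2n+1}$ into the two hemispherical $(2n+2)$-balls $S^{2n+2}_\pm$ and set $F^\pm:=\GL\cap S^{2n+2}_\pm$. Since $\GL$ is transverse to $S^{2n+1}$, each $F^\pm$ is an $m$-colored configuration of pairwise transversal oriented compact $2n$-dimensional submanifolds of $S^{2n+2}_\pm$ whose boundary on $\p S^{2n+2}_\pm$ is $L$ (up to orientation). The independence theorem of Section~\ref{s3.4} then identifies $\Gs_\Gz(L)$ with the signature of the intersection form $Q_+$ on $H_{n+1}(X(F^+);\C_\mu)$.

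Let $V:=H_{n+1}(X(\GL);\C_\mu)$, let $\pi_+\colon H_{n+1}(X(F^+);\C_\mu)\to V$ be the map induced by the inclusion $X(F^+)\hookrightarrow X(\GL)$, and put $V_+:=\pi_+(H_{n+1}(X(F^+);\C_\mu))$. By twisted acyclicity the boundary $\p N(\GL)$ is homologically invisible, so the ambient intersection form $Q$ on $V$ is non-degenerate; by Theorem~\ref{VanSign} its signature vanishes because $\GL=\p\GD$ for the configuration $\GD\subset D^{2n+3}$. For any $a,b\in H_{n+1}(X(F^+);\C_\mu)$ the intersection number $Q(\pi_+a,\pi_+b)$ can be computed by chain representatives that lie inside $X(F^+)$, so $Q(\pi_+a,\pi_+b)=Q_+(a,b)$. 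In particular $\ker\pi_+$ sits inside the radical of $Q_+$, and $Q|_{V_+}$ is a well-defined form whose signature equals $\Gs(Q_+)=\Gs_\Gz(L)$.

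The elementary Hermitian-algebra input is that for a non-degenerate form of signature zero on $V$, every subspace $W\subset V$ satisfies $|\Gs(Q|_W)|\le\tfrac12\dim V$: both the maximal positive and the maximal negative definite subspaces of $V$ have dimension exactly $\tfrac12\dim V$, and any definite subspace of $W$ injects into the corresponding one of $V$. Applied with $W=V_+$ this gives $|\Gs_\Gz(L)|\le\tfrac12\dim_\C V$. Theorem~\ref{EstTwHom} bounds the twisted dimension by the untwisted one, and Alexander duality in $S^{2n+2}$, together with $P$ being a field, yields
\[
\dim_\C V \le \dim_P H_{n+1}(X(\GL);P) = \dim_P H^n(\GL;P) = \dim_P H_n(\GL;P),
\]
so $2|\Gs_\Gz(L)|\le\dim_P H_n(\GL;P)$. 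The most delicate step is making the chain-level identifications underlying both the non-degeneracy of $Q$ on $V$ and the equality $\Gs(Q|_{V_+})=\Gs_\Gz(L)$ rigorous in the presence of the singular intersection strata $\GL_{i_1}\cap\dots\cap\GL_{i_k}$, where twisted acyclicity must be invoked not only along the meridional circles of each $\GL_i$ but along these deeper strata as well.
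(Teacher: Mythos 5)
Your proof is correct, but it takes a genuinely different route from the paper's. The paper works with \emph{both} hemisphere images $V_+$ and $V_-$ inside $H_{n+1}(S^{2n+2}\sminus\GL;\C_\mu)$: it observes that their restricted signatures are $\Gs_\Gz(L)$ and $-\Gs_\Gz(L)$ respectively and that $V_+$ and $V_-$ are orthogonal (cycles in disjoint open hemispheres), whence one can extract a positive-definite subspace from $V_+$ and a negative-definite subspace from $V_-$, each of dimension at least $|\Gs_\Gz(L)|$, with trivial intersection --- giving $2|\Gs_\Gz(L)|\le\dim_\C V$ directly without any global structure theory for $Q$. You instead use only $V_+$, and buy the factor of two from the global structure of $Q$: signature zero (Theorem \ref{VanSign}) plus non-degeneracy plus the Hermitian-algebra fact that a non-degenerate signature-zero form on $V$ bounds every subspace signature by $\tfrac12\dim V$. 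One substantive remark: the reliance on non-degeneracy of $Q$ --- which you rightly flag as the delicate step, depending on twisted acyclicity of $\p X(\GL)$ across the deeper intersection strata and on $\Gz_i\ne1$ --- is in fact unnecessary. A Hermitian form of signature zero, degenerate or not, has at most $\tfrac12\dim V$ positive eigenvalues and at most $\tfrac12\dim V$ negative ones, since any definite subspace of $V$ has dimension at most the corresponding eigenvalue count; hence $|\Gs(Q|_W)|\le\tfrac12\dim V$ for every $W$ with no non-degeneracy hypothesis. Dropping non-degeneracy removes the one place where your argument needed more care than the paper's, and leaves it at least as clean; the paper's orthogonality of $V_+$ and $V_-$, conversely, is stated but does not actually get used in the extraction of the two disjoint definite subspaces.
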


\begin{proof}
The intersection form on $H_{n+1}(S^{2n+2}\sminus\GL;\C_\mu)$ restricted
to the images of $H_{n+1}(S^{2n+2}_+\sminus\GL;\C_\mu)$ and  
$H_{n+1}(S^{2n+2}_-\sminus\GL;\C_\mu)$ has signatures $\Gs_\Gz(L)$ and
$-\Gs_\Gz(L)$, respectively. Therefore the dimension of each of the images
is at least $|\Gs_\Gz(L)|$. 

The images are obviously orthogonal to each
other with respect to the intersection form, because their elements 
can be realized by cycles lying in disjoin open hemi-spheres.
Hence 
$$
2|\Gs_\Gz(L)|\le\dim_\C H_{n+1}(S^{2n+2}\sminus\GL;\C_\mu).
$$
On the other hand, by Theorem \ref{EstTwHom}, 
$$\dim_\C H_{n+1}(S^{2n+2}\sminus\GL;\C_\mu)\le 
\dim_P H_{n+1}(S^{2n+2}\sminus\GL;P)=\dim_P H_n(\GL;P).$$ 
Summing up these two inequalities, we obtain the desired one.
\end{proof}

\subsubsection{General slice inequalities}\label{s5.2}
\begin{Th}\label{Th-Slice}
Under assumptions above
\begin{multline}\label{ineq:slice} 
2|\Gs_\Gz(L)| +2n^r_\Gz(L)\\
\le 
\sum_{s=0}^{2r}(-1)^s\dim_P H_{n-s}(\GL\sminus L;P)
+\sum_{s=-2r+1}^{2r-1}(-1)^s\dim_P H_{n-s}(\GL;P)
\end{multline}
\end{Th}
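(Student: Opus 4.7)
My plan is to adapt the proof of Theorem \ref{slice-small-Th} by replacing the crude dimension bound on $\dim(V_++V_-)$ with a Mayer--Vietoris alternating sum, in the same spirit in which Theorem \ref{Th8} refines the crude bound \eqref{ineq:S=<D}. Set $X = S^{2n+2}\sminus\Lambda$, $X_\pm = S^{2n+2}_\pm\sminus\Lambda$, $X_0 = X_+\cap X_- = S^{2n+1}\sminus L$, and let $V_\pm$ denote the images of $H_{n+1}(X_\pm;\C_\mu)$ in $H_{n+1}(X;\C_\mu)$. Exactly as in the proof of Theorem \ref{slice-small-Th}, the intersection form on $H_{n+1}(X;\C_\mu)$ restricted to $V_\pm$ has signature $\pm\Gs_\Gz(L)$, the subspaces $V_+,V_-$ are mutually orthogonal, so
$$\dim_\C(V_++V_-) \ge 2|\Gs_\Gz(L)|.$$

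The subspace $V_++V_-$ is the image of the twisted Mayer--Vietoris map
$$\phi:H_{n+1}(X_+;\C_\mu)\oplus H_{n+1}(X_-;\C_\mu)\to H_{n+1}(X;\C_\mu).$$
Applying Lemma \ref{Lemma1Th7} to a suitable truncation of the Mayer--Vietoris exact sequence for $X=X_+\cup_{X_0}X_-$ centered on $\phi$ produces an upper bound on $\mathrm{rank}(\phi)$ as an alternating sum of $\dim_\C H_*(X_\pm;\C_\mu)$, $\dim_\C H_*(X;\C_\mu)$ and $\dim_\C H_*(X_0;\C_\mu)$. Because $X_0=X(L)$ behaves as a closed $(2n+1)$-manifold for twisted homology (its tubular-neighborhood boundary is twisted-acyclic), twisted Poincar\'e duality gives $\dim_\C H_{n+1+s}(X_0;\C_\mu)=\dim_\C H_{n-s}(X_0;\C_\mu)$, so the $X_0$-contributions from the two arms of the truncation assemble into exactly $2n^r_\Gz(L)$ (cf.\ \eqref{null}), which we move to the left.

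The remaining $\dim_\C H_*(X_\pm;\C_\mu)$ and $\dim_\C H_*(X;\C_\mu)$ terms are bounded above by $\dim_P H_*(-;P)$ via Theorem \ref{EstTwHom} and then converted via Alexander duality, just as in the proof of Theorem \ref{Th7}. Alexander duality in the closed sphere $S^{2n+2}$ converts $\dim_P H_k(X;P)$ into $\dim_P H_{2n+1-k}(\Lambda;P)$, producing the symmetric sum $\sum_{s=-2r+1}^{2r-1}(-1)^s\dim_P H_{n-s}(\Lambda;P)$. Alexander--Lefschetz duality in the ball $D^{2n+2}=S^{2n+2}_\pm$ converts $\dim_P H_k(X_\pm;P)$ into $\dim_P H_{k-1}(\Lambda_\pm;P)$; since $\Lambda\sminus L\simeq\Lambda_+^\circ\sqcup\Lambda_-^\circ$, these two hemi-sphere contributions assemble into $\sum_{s=0}^{2r}(-1)^s\dim_P H_{n-s}(\Lambda\sminus L;P)$.

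The principal difficulty is the sign-and-index bookkeeping when applying Lemma \ref{Lemma1Th7} to the three-periodic Mayer--Vietoris sequence: one must choose the truncation length so that the $X_0$-degrees actually covered are precisely $n-2r,\ldots,n$ together with $n+1,\ldots,n+1+2r$ (which then fold up under Poincar\'e duality into the factor $2n^r_\Gz(L)$), while the $X$- and $X_\pm$-degrees assemble into the symmetric and one-sided ranges on the right of \eqref{ineq:slice} respectively. The asymmetric shape of \eqref{ineq:slice} --- symmetric $s$-range for the $\Lambda$-sum versus one-sided $s$-range for the $\Lambda\sminus L$-sum --- is precisely the algebraic trace of the geometric asymmetry between the closed ambient $\Lambda$ and the manifold-with-boundary pieces $\Lambda_\pm$.
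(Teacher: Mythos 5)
Your outline has the right scaffolding (the filtration of $H_{n+1}(X;\C_\mu)$ by $V_\pm$, passage to untwisted $P$-coefficients via Theorem~\ref{EstTwHom}, and Alexander/Lefschetz duality to land on $\GL$ and $\GL\sminus L$), and the duality conversions at the end are essentially the paper's Lemmas~\ref{Lemma3Th} and~\ref{Lemma4Th}. But there is a genuine gap where you expect the factor $2n^r_\Gz(L)$ to appear, and it is not only ``sign-and-index bookkeeping.''

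Your starting estimate is $2|\Gs_\Gz(L)|\le\dim(V_++V_-)=\rnk\phi$, and you then plan to bound $\rnk\phi$ from above by applying Lemma~\ref{Lemma1Th7} to a truncation of the Mayer--Vietoris sequence ``centered on $\phi$.'' But Lemma~\ref{Lemma1Th7} is a one-sided truncation: the rank that appears on the left is $\rnk\rho_{n+1}$ where $\rho_{n+1}$ is the map \emph{entering} the top term of the truncated window. For $\rnk\phi$ to appear you must take $\phi$ at one end of the window, and then the window reaches only the $X_0$-degrees on one side of $n+1$. Re-indexing the $3$-periodic Mayer--Vietoris sequence as a single complex $\dots\to C_a\to C_{a-1}\to\dots$ with $\phi=\rho_a$, one checks that the $X_0$-terms $H_{n-k}(X_0;\C_\mu)$ enter the alternating sum with coefficient $-(-1)^k$, and after moving them to the left the relation reads $\rnk\phi + n^r_\Gz(L)\le(\text{$X$- and $X_\pm$-terms})$; combined with $2|\Gs_\Gz(L)|\le\rnk\phi$ this yields $2|\Gs_\Gz(L)|+n^r_\Gz(L)\le\dots$, i.e.\ only \emph{one} copy of the nullity. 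If instead you widen the window so that it covers $X_0$-degrees both below $n+1$ and above $n$ (as your ``fold up under Poincar\'e duality'' remark requires), then $\phi$ is in the interior of the window, $\rnk\phi$ no longer appears in the Euler-characteristic identity, and your plan collapses.

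The missing ingredient is the paper's Lemma~\ref{Lemma1Th}: the inclusion $j_*:H_{n+1}(S^{2n+1}\sminus L;\C_\mu)\to H_{n+1}(S^{2n+2}\sminus\GL;\C_\mu)$ and the Mayer--Vietoris boundary $\p:H_{n+1}(S^{2n+2}\sminus\GL;\C_\mu)\to H_n(S^{2n+1}\sminus L;\C_\mu)$ are \emph{adjoint} with respect to the (nonsingular) twisted intersection pairings, hence $\rnk j_*=\rnk\p$. In the three-step filtration $j_*H_{n+1}(X_0)\subset V_++V_-\subset H_{n+1}(X)$ this identifies the dimensions of the top and bottom quotients, giving $2|\Gs_\Gz(L)|+2\rnk j_*\le\dim_\C H_{n+1}(S^{2n+2}\sminus\GL;\C_\mu)$. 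The paper then bounds $\rnk j_*$ from \emph{below} by applying Lemma~\ref{Lemma2Th8} to the long exact sequence of the pair $(S^{2n+2}\sminus\GL,\ S^{2n+1}\sminus L)$ (not the Mayer--Vietoris sequence), and that lower bound contains precisely $n^r_\Gz(L)$. The doubling of the nullity therefore comes from this rank-equality, not from folding two halves of a Mayer--Vietoris window. Without the adjointness argument, your scheme can at best establish $2|\Gs_\Gz(L)|+n^r_\Gz(L)\le\text{(RHS)}$, which is strictly weaker than \eqref{ineq:slice}.
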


\begin{lem}\label{Lemma1Th} 
Let $j$ be the inclusion $S^{2n+1}\sminus L\to S^{2n+2}\sminus\GL$. 
Then
\begin{multline}\label{eq0PfTh} 
2|\Gs_\Gz(L)| +
2\rnk(j_*:H_{n+1}(S^{2n+1}\sminus L;\C_\mu)\to 
H_{n+1}(S^{2n+2}\sminus\GL;\C_\mu))\\
\le \dim_\C H_{n+1}(S^{2n+2}\sminus\GL;\C_\mu)
\end{multline}
\end{lem}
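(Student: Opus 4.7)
The plan is to view $W := \Im(j_*) \subset V := H_{n+1}(S^{2n+2}\sminus\GL;\C_\mu)$ as an isotropic subspace for the intersection form and then pass to its symplectic reduction $W^\perp/W$. Set $V_\pm := \Im\bigl(H_{n+1}(S^{2n+2}_\pm\sminus\GL;\C_\mu)\to V\bigr)$. The proof of Theorem \ref{slice-small-Th} already records that $V_+$ and $V_-$ are mutually orthogonal and that the intersection form of $V$ restricts to $V_\pm$ with signatures $\pm\Gs_\Gz(L)$. Twisted Poincar\'e duality applied to the open manifold $S^{2n+2}\sminus\GL$, whose boundary tori contribute nothing to twisted homology, supplies the non-degeneracy of the form on $V$.

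First I would observe that any cycle in $S^{2n+1}\sminus L$ can be pushed into either hemisphere, so $W \subset V_+\cap V_-$. Combined with $V_+\perp V_-$, this forces $W$ to be isotropic and gives $V_+,V_-\subset W^\perp$ (since $W\subset V_\mp \subset V_\pm^\perp$). By non-degeneracy of the form on $V$, $\dim W^\perp = \dim V -\dim W$, so the induced form on $W^\perp/W$ is non-degenerate of dimension $\dim V - 2\dim W$. Writing $\bar V_\pm := V_\pm/W\subset W^\perp/W$, we note that $W$ sits inside the radical of the form on $V_\pm$, so the induced forms on $\bar V_\pm$ still have signatures $\pm\Gs_\Gz(L)$, and the orthogonality $\bar V_+\perp\bar V_-$ survives.

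To finish I would exhibit a $2|\Gs_\Gz(L)|$-dimensional subspace of $W^\perp/W$. Assume $\Gs_\Gz(L)\ge 0$ (the opposite sign is symmetric). Choose $P\subset\bar V_+$ positive definite of dimension $\Gs_\Gz(L)$ and $Q\subset\bar V_-$ negative definite of the same dimension; such subspaces exist because the positive (resp.\ negative) index of the form on $\bar V_\pm$ is at least $\Gs_\Gz(L)$. Any $x\in P\cap Q$ would satisfy both $\langle x,x\rangle\ge 0$ and $\langle x,x\rangle\le 0$, forcing $x=0$. Since also $P\perp Q$, the sum $P+Q$ is a $2\Gs_\Gz(L)$-dimensional subspace of $W^\perp/W$, and we conclude $\dim V - 2\dim W\ge 2|\Gs_\Gz(L)|$, which rearranges to the asserted inequality.

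The main technical point is the non-degeneracy of the intersection form on $V$ and the identification of the signatures of its restrictions to $V_\pm$; both rest on treating $S^{2n+2}\sminus\GL$ as if it were a closed manifold via twisted acyclicity of the boundary circles, as packaged in the appendix alongside Theorem \ref{VanSign}. Once this geometric input is accepted, the remainder is a purely linear-algebraic symplectic-reduction argument.
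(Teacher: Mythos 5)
Your argument is correct but packages the proof differently from the paper, in a way worth noting. Viro works with the three-step filtration $W\subset V_++V_-\subset V$ (where $V_\pm=i^\pm_*H_{n+1}(S^{2n+2}_\pm\sminus\GL;\C_\mu)$) and bounds the dimensions of the successive quotients: the bottom term has dimension $\rnk j_*$, the top quotient $V/(V_++V_-)$ has dimension $\rnk\p$ by Mayer--Vietoris exactness, and the adjointness $j_*(a)\circ b=a\circ\p(b)$ together with non-singularity of the twisted intersection pairings forces $\rnk\p=\rnk j_*$; the middle quotient $(V_++V_-)/W$ is then bounded below by $2|\Gs_\Gz(L)|$ using exactly the facts you use (orthogonality of $V_\pm$, $V_+\cap V_-=W$, and the signatures of the restricted forms). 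You bypass the Mayer--Vietoris connecting map and the $j_*$--$\p$ duality entirely by invoking non-degeneracy of the form on $V$ to get $\dim W^\perp=\dim V-\dim W$, and then bound $\dim(W^\perp/W)$ from below by exhibiting a definite $2|\Gs_\Gz(L)|$-dimensional subspace built from $\bar V_\pm$. Since in fact $W^\perp=V_++V_-$ (both have dimension $\dim V-\rnk j_*$), the two arguments are counting the same quotient; yours is a slightly cleaner linear-algebraic wrapper, while the paper's route makes visible the duality between $j_*$ and the Mayer--Vietoris boundary map, a structural fact of independent interest. Both versions rest on the same geometric inputs already recorded in the proof of Theorem~\ref{slice-small-Th}, and both ultimately need the non-degeneracy of the twisted intersection form on $V$, which comes from the twisted acyclicity of the $S^1$-fibered boundary of the exterior of $\GL$.
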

\begin{proof}
Denote by $i^\pm$ the inclusion 
$S^{2n+2}_\pm\sminus\GL\to S^{2n+2}\sminus\GL$.
Observe that the space $H_{n+1}(S^{2n+2}\sminus\GL;\C_\mu)$ 
has a natural filtration:
\begin{multline}\label{eq1PfTh}
j_*H_{n+1}(S^{2n+1}\sminus L;\C_\mu)\\
\subset
i^+_*H_{n+1}(S^{2n+2}_+\sminus\GL;\C_\mu)+
i^-_*H_{n+1}(S^{2n+2}_-\sminus\GL;\C_\mu)\\
\subset
H_{n+1}(S^{2n+2}\sminus\GL;\C_\mu)
\end{multline}
The inclusion homomorphisms 
$$j_*:H_{n+1}(S^{2n+1}\sminus L;\C_\mu)\to 
H_{n+1}(S^{2n+2}\sminus\GL;\C_\mu)$$ 
and the boundary homomorphism 
$$
\p:H_{n+1}(S^{2n+2}\sminus\GL;\C_\mu)\to H_{n}(S^{2n+1}\sminus L;\C_\mu) 
$$
of the Mayer-Vietoris sequence of the triad $(S^{2n+2}\sminus\GL;
S^{2n+2}_+\sminus\GL,S^{2n+2}_-\sminus\GL)$ are dual to each other
with respect to the intersection forms: 
$$
j_*(a)\circ b=a\circ\p(b)\ \text{ for any }a\in
H_{n+1}(S^{2n+1}\sminus L;\C_\mu)\text{ and }b\in
H_{n+1}(S^{2n+2}\sminus\GL;\C_\mu).
$$
Since the intersection forms are non-singular, it follows that 
$\rnk j_*=\rnk \p$.

By exactness of the Mayer-Vietoris sequence, the rank of $\p$ is 
the dimensions of the top quotient of the filtration \eqref{eq1PfTh}, while
the rank of $j_*$ is the dimension of the smallest term 
$j_*H_{n+1}(S^{2n+1}\sminus L;\C_\mu)$ of this filtration.

The middle term of the filtration contains the subspaces 
$i^+_*H_{n+1}(S^{2n+2}_+\sminus\GL;\C_\mu)$ and 
$i^-_*H_{n+1}(S^{2n+2}_-\sminus\GL;\C_\mu)$. Their intersection 
is the smallest term, which is orthogonal to both of the subspaces.
Therefore the dimension of the quotient of the middle term of the
filtration by the smallest term is at least $2|\Gs_\Gz(L)|$

The dimension of the whole space $H_{n+1}(S^{2n+2}\sminus\GL;\C_\mu)$
is the sum of the dimensions of the factors. We showed above that the
top and lowest factor have the same dimensions equal to $\rnk j_*$ and 
that the dimension of the middle factor is at least $2|\Gs_\Gz(L)|$.
\end{proof}

\begin{lem}\label{Lemma1Th8}
For any exact sequence $\dots\overset{\rho_{k+1}}\to C_k\overset{\rho_k}\to
 C_{k-1}\overset{\rho_{k-1}}\to\dots$ of vector spaces
and any integers $n$ and $t$ 
\begin{equation}\label{eqL1Th8} \rnk(\rho_{n})-\rnk(\rho_{n+2t})
=\sum_{s=0}^{2t-1}(-1)^s\dim C_{n+s}
\end{equation}
\end{lem}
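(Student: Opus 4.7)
The plan is to imitate the proof of Lemma \ref{Lemma1Th7}: extract a finite exact sequence from the given long exact sequence whose Euler characteristic yields exactly the desired identity. The difference is that Lemma \ref{Lemma1Th7} truncates the sequence on both sides by the images of the maps at the two ends in such a way that the contributions of the boundary terms \emph{add}, whereas here we want them to \emph{subtract}, so I will truncate on the opposite sides.

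Concretely, first I would form the sequence
\begin{equation*}
0 \to \Im\rho_{n+2t} \hookrightarrow C_{n+2t-1} \xrightarrow{\rho_{n+2t-1}} C_{n+2t-2} \to \cdots \to C_{n+1} \xrightarrow{\rho_{n+1}} C_n \xrightarrow{\rho_n} \Im\rho_n \to 0,
\end{equation*}
where the first arrow is the inclusion of $\Im\rho_{n+2t}=\ker\rho_{n+2t-1}$ and the last is the obvious surjection. Exactness at the two extreme positions follows from the definitions of these images, and exactness at the intermediate positions is inherited from the original exact sequence; in particular, exactness at $C_n$ uses $\ker\rho_n=\Im\rho_{n+1}$.

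Then I would compute the Euler characteristic of this finite exact sequence, which vanishes. Counting signs from the left, the extreme term $\Im\rho_{n+2t}$ contributes $+\rnk(\rho_{n+2t})$; the $2t$ middle terms $C_{n+2t-1},\dots,C_n$ contribute an alternating sum; and the extreme term $\Im\rho_n$ sits in an odd position and contributes $-\rnk(\rho_n)$. Setting the total to zero and reindexing the middle sum by $s=2t-k$ (so that $(-1)^k=(-1)^s$ since $2t$ is even) converts the alternating sum into $\sum_{s=0}^{2t-1}(-1)^s\dim C_{n+s}$, producing the claimed equality $\rnk(\rho_n)-\rnk(\rho_{n+2t})=\sum_{s=0}^{2t-1}(-1)^s\dim C_{n+s}$.

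There is no real obstacle here; the only thing to watch carefully is the sign bookkeeping, especially that the length of the truncated sequence is $2t+2$ so that the two extreme image terms sit at positions of opposite parity, which is exactly what produces the minus sign on the left-hand side (in contrast to Lemma \ref{Lemma1Th7}, where the length is odd and the two image contributions have the same sign).
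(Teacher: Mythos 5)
Your proof is correct and is essentially identical to the paper's: both truncate the long exact sequence to the finite exact sequence $0\to\Im\rho_{n+2t}\to C_{n+2t-1}\to\cdots\to C_n\to\Im\rho_n\to0$ and set its Euler characteristic to zero. Your added remark about the parity of the sequence length (even here, odd in Lemma~\ref{Lemma1Th7}) correctly explains the sign difference between the two lemmas.
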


\begin{proof}The Euler characteristic of the exact sequence
$$
0\to\Im\rho_{n+2t}\hookrightarrow C_{n+2t-1}\overset{\rho_{n+2t-1}}\to
C_{n+2t-2}\to\dots\overset{\rho_{n+1}}\to C_{n}\to\Im\rho_{n}\to0
$$ 
is 
$
\rnk(\rho_{n})-\sum_{s=0}^{2t-1}(-1)^s\dim C_{n+s}-\rnk(\rho_n+2t)$,
that is the difference between the left and right hand sides of
\eqref{eqL1Th8}.
On the other hand, it vanishes, as the Euler characteristic of an exact
sequence. 
\end{proof}

\begin{lem}\label{Lemma2Th8}
Let $X$ be a topological space, $A$ its subspace, $\xi$ a local coefficient
system on $X$ with fiber $\C$. Then for any natural $n$ and integer 
$r$
\begin{multline}
\rnk(H_{n+1}(A;\xi)\to H_{n+1}(X;\xi))-
 \rnk(H_{n+2+2r}(X;\xi)\to H_{n+2+2r}(X,A;\xi))\\
\\=
\sum_{s=0}^{2r}(-1)^sb_{n+1+s}(A)
-\sum_{s=0}^{2r}(-1)^sb_{n+2+s}(X,A)
+\sum_{s=0}^{2r-1}(-1)^sb_{n+2+s}(X)
\end{multline} 
where $b_k(*)=\dim_\C H_k(*;\xi)$.
\end{lem}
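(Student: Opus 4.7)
The plan is to imitate the one-line proof of Lemma \ref{Lemma2Th7}: feed the long exact homology sequence of the pair $(X,A)$ with coefficients in $\xi$ into Lemma \ref{Lemma1Th8}, with indices chosen so that the two ranks produced by that lemma coincide with the two ranks named in the present statement.

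The key preparatory step is to linearize the three-periodic pair sequence
\[
\dots \to H_k(A;\xi) \xrightarrow{i_*} H_k(X;\xi) \xrightarrow{j_*} H_k(X,A;\xi) \xrightarrow{\partial} H_{k-1}(A;\xi) \to \dots
\]
into a single chain $\dots \xrightarrow{\rho_{N+1}} C_N \xrightarrow{\rho_N} C_{N-1} \to \dots$ by setting $C_{3k} = H_k(A;\xi)$, $C_{3k-1} = H_k(X;\xi)$, $C_{3k-2} = H_k(X,A;\xi)$. Under this convention $i_*\colon H_{n+1}(A;\xi) \to H_{n+1}(X;\xi)$ becomes $\rho_{3n+3}$ while $j_*\colon H_{n+2+2r}(X;\xi) \to H_{n+2+2r}(X,A;\xi)$ becomes $\rho_{3n+5+6r}$. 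Applying Lemma \ref{Lemma1Th8} with $N=3n+3$ and $t = 3r+1$ (so that $2t-1 = 6r+1$) then gives
\[
\rnk(\rho_{3n+3}) - \rnk(\rho_{3n+5+6r}) = \sum_{s=0}^{6r+1} (-1)^s \dim_{\C} C_{3n+3+s}.
\]

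All that remains is to partition the $6r+2$ terms on the right by the residue of $s$ modulo $3$: the indices $s \equiv 0$ pick out the $H_*(A;\xi)$ subsum, $s \equiv 1$ picks out the $H_*(X,A;\xi)$ subsum, and $s \equiv 2$ picks out the $H_*(X;\xi)$ subsum. Because $6r+1$ falls one short of a full three-period, the $H_*(X;\xi)$ subsum has one fewer term than the other two, which is exactly the source of the asymmetric upper limit $2r-1$ versus $2r$ in the claimed identity. The sign contributions work out cleanly since $(-1)^{3j}=(-1)^j$: the $A$-terms come with signs $(-1)^j$, the relative terms pick up an extra minus from $(-1)^{3j+1}$, and the $X$-terms recover $(-1)^j$.

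There is no substantive obstacle here; once the correct linearization of the pair sequence is chosen, the argument is a routine sign and range check, which is precisely why the analogous Lemma \ref{Lemma2Th7} admits a one-line proof.
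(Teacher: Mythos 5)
Your proof is correct and is exactly the approach the paper takes: the paper's proof of Lemma \ref{Lemma2Th8} is the single sentence ``Apply Lemma \ref{Lemma1Th8} to the homology sequence of pair $(X,A)$ with coefficients in $\xi$,'' and you have simply spelled out the bookkeeping it leaves implicit. Your linearization $C_{3k}=H_k(A;\xi)$, $C_{3k-1}=H_k(X;\xi)$, $C_{3k-2}=H_k(X,A;\xi)$ correctly turns the long exact sequence into the chain required by Lemma \ref{Lemma1Th8}, the identification of $i_*$ with $\rho_{3n+3}$ and $j_*$ with $\rho_{3n+5+6r}$ is right, the choice $t=3r+1$ makes the two ranks match, and the mod-$3$ splitting of the resulting sum (with the $s\equiv 2$ branch terminating one step early, giving the upper limit $2r-1$) reproduces the three alternating sums with the claimed signs.
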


\begin{proof}
Apply Lemma \ref{Lemma1Th8} to the homology sequence of pair $(X,A)$ with
coefficients in $\xi$.
\end{proof}

\begin{lem}\label{Lemma2Th}
For any integer $r$ with $0\le r\le\frac{n}2$\\   
\begin{multline}\label{ineq:Slice}
2|\Gs_\Gz(L)|+2n^r_\Gz(L)\\
\le 2\sum_{s=0}^{2r}(-1)^s\dim_\C
H_{n+2+s}(S^{2n+2}\sminus\GL,S^{2n+1}\sminus L;\C_\mu)\\
+\sum_{s=-2r+1}^{2r-1}(-1)^s\dim_\C H_{n+1+s}(S^{2n+2}\sminus\GL;\C_\mu)
\end{multline}
\end{lem}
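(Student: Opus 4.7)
The plan is to combine Lemma~\ref{Lemma1Th} (which bounds $2|\Gs_\Gz(L)|$ by $\dim_\C H_{n+1}(X;\C_\mu) - 2\rnk(j_*)$) with the pair-sequence rank identity of Lemma~\ref{Lemma2Th8}, and then symmetrize the $X$-Betti-number contribution using Poincar\'e duality on $X$. I abbreviate $X = S^{2n+2}\sminus\GL$, $A = S^{2n+1}\sminus L$, and $b_k(Y) = \dim_\C H_k(Y;\C_\mu)$. Note that Lemma~\ref{Lemma1Th} immediately reduces the task to bounding $b_{n+1}(X) - 2\rnk(j_*) + 2n^r_\Gz(L)$ by the stated right-hand side.

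I would first apply Lemma~\ref{Lemma2Th8} to the pair $(X,A)$ with the given $n$ and $r$. Dropping the non-negative term $\rnk\bigl(H_{n+2+2r}(X;\C_\mu)\to H_{n+2+2r}(X,A;\C_\mu)\bigr)$ from its left-hand side and using the identity $n^r_\Gz(L) = \sum_{s=0}^{2r}(-1)^s b_{n+1+s}(A)$ from~\eqref{null} produces the lower bound
\[
\rnk(j_*) \;\ge\; n^r_\Gz(L) \;-\; \sum_{s=0}^{2r}(-1)^s b_{n+2+s}(X,A) \;+\; \sum_{s=0}^{2r-1}(-1)^s b_{n+2+s}(X).
\]
Substituting into Lemma~\ref{Lemma1Th} and moving $2n^r_\Gz(L)$ across gives
\[
2|\Gs_\Gz(L)| + 2n^r_\Gz(L) \;\le\; b_{n+1}(X) + 2\sum_{s=0}^{2r}(-1)^s b_{n+2+s}(X,A) - 2\sum_{s=0}^{2r-1}(-1)^s b_{n+2+s}(X),
\]
which already realises the coefficient $2$ in front of the first sum on the right-hand side of \eqref{ineq:Slice}.

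The hard part will be to absorb the one-sided $X$-Betti-number combination $b_{n+1}(X) - 2\sum_{s=0}^{2r-1}(-1)^s b_{n+2+s}(X)$ into the symmetric alternating sum $\sum_{s=-2r+1}^{2r-1}(-1)^s b_{n+1+s}(X)$ required by the statement. I expect this to rest on twisted Poincar\'e--Lefschetz duality for $X$: the boundary $\partial X$ consists of circle bundles over the pieces of $\GL$, along each of which the monodromy of $\C_\mu$ is non-trivial, so by the twisted acyclicity of a circle recalled in Section~\ref{s1.2} it is twisted-acyclic. Hence $X$ behaves like a closed oriented $(2n{+}2)$-manifold for $\C_\mu$-homology, and in particular $b_k(X) = b_{2n+2-k}(X)$, which symmetrizes the $X$-Betti numbers around degree $n{+}1$. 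Re-indexing the one-sided sum by means of this symmetry, together with the standard telescoping of alternating sums, should yield the symmetric sum in \eqref{ineq:Slice}. If a residual boundary contribution survives this manipulation, I would dispatch it by invoking in parallel the sum-type rank identity of Lemma~\ref{Lemma1Th7} (packaged as Lemma~\ref{Lemma2Th7}), which pairs $\rnk(j_*)$ with an auxiliary $\alpha$-rank and whose alternating sum also produces a copy of $n^r_\Gz(L)$ available to cancel the extra term.
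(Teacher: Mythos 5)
Your reduction to
\[
2|\Gs_\Gz(L)| + 2n^r_\Gz(L) \le b_{n+1}(X) + 2\sum_{s=0}^{2r}(-1)^s b_{n+2+s}(X,A) - 2\sum_{s=0}^{2r-1}(-1)^s b_{n+2+s}(X),
\]
with $X=S^{2n+2}\sminus\GL$, $A=S^{2n+1}\sminus L$, is exactly what the paper does (this is eq.\ \eqref{eq2PfTh} of the proof, after correcting an index typo there), and your identification of the duality to be invoked, namely $b_{n+1+s}(X)=b_{n+1-s}(X)$ coming from Poincar\'e--Lefschetz duality for the exterior of $\GL$ with twisted-acyclic boundary, is also the intended mechanism. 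So the \emph{route} is the paper's.

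However, the step you describe only as ``should yield the symmetric sum'' is precisely where both you and the paper stop short, and it does not in fact close. Using the symmetry $b_{n+1+s}(X)=b_{n+1-s}(X)$, a direct reindexing gives
\[
\sum_{s=-2r+1}^{2r-1}(-1)^s b_{n+1+s}(X) = b_{n+1}(X) - 2\sum_{s=0}^{2r-2}(-1)^s b_{n+2+s}(X),
\]
whose upper limit is $2r-2$, not $2r-1$. Hence the quantity you actually produced,
$b_{n+1}(X)-2\sum_{s=0}^{2r-1}(-1)^s b_{n+2+s}(X)$, equals the symmetric sum in \eqref{ineq:Slice} \emph{plus} $2b_{n+1+2r}(X)$, a nonnegative surplus on the wrong side of the inequality. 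What you have actually proved, after symmetrization, is the inequality with $\sum_{s=-2r}^{2r}$ in place of $\sum_{s=-2r+1}^{2r-1}$; the two differ by $2b_{n+1+2r}(X)$. To obtain the stated range one would need in addition $\dim_\C H_{n+1+2r}(X;\C_\mu)\le\rnk\bigl(H_{n+2+2r}(X;\C_\mu)\to H_{n+2+2r}(X,A;\C_\mu)\bigr)$ (i.e.\ the rank term you discarded in Lemma~\ref{Lemma2Th8} must be at least that Betti number), and this is false in general; already $r=0$ would require $b_{n+1}(X)\le\rnk(H_{n+2}(X)\to H_{n+2}(X,A))$.

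Your fallback of invoking Lemma~\ref{Lemma2Th7} does not repair this: that lemma controls $\rnk\bigl(H_{n+1}(X)\to H_{n+1}(X,A)\bigr)$, not the inclusion rank $\rnk(j_*)=\rnk\bigl(H_{n+1}(A)\to H_{n+1}(X)\bigr)$ appearing in Lemma~\ref{Lemma1Th}, and it produces a lower-degree combination of Betti numbers that does not cancel the $b_{n+1+2r}(X)$ surplus. So there is a genuine gap at the symmetrization step: you should carry out the reindexing explicitly and will then see that either the statement's range $\sum_{s=-2r+1}^{2r-1}$ must be corrected to $\sum_{s=-2r}^{2r}$, or an additional argument is required which the approach you (and the paper) outline does not supply.
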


\begin{proof}By Lemma \ref{Lemma2Th8} 
applied to the pair
$(S^{2n+2}\sminus\GL,S^{2n+1}\sminus L)$, we obtain
\begin{multline}\rnk(j_*:H_{n+1}(S^{2n+1}\sminus L;\C_\mu)
\to H_{n+1}(S^{2n+2}\sminus\GL;\C_\mu))\\
\ge 
\sum_{s=0}^{2r}(-1)^sH_{n+1+s}(S^{2n+1}\sminus L;\C_\mu)\\
-\sum_{s=0}^{2r}(-1)^s\dim_\C
H_{n+2+s}(S^{2n+2}\sminus\GL,S^{2n+1}\sminus L;\C_\mu)\\
+\sum_{s=0}^{2r-1}(-1)^s\dim_\C H_{n+2+s}(S^{2n+2}\sminus\GL;\C_\mu)
\end{multline}
From this inequality and inequality \eqref{eq0PfTh} we obtain
\begin{multline}\label{eq2PfTh}
2|\Gs_\Gz(L)|+2n^r_\Gz(L)\\
\le 2\sum_{s=0}^{2r}(-1)^s\dim_\C
H_{n+1+s}(S^{2n+2}\sminus\GL,S^{2n+1}\sminus L;\C_\mu)\\
-2\sum_{s=0}^{2r-1}(-1)^s\dim_\C H_{n+s+2}(S^{2n+2}\sminus\GL;\C_\mu)\\
+\dim_\C H_{n+1}(S^{2n+2}\sminus\GL;\C_\mu)
\end{multline}
From this and the Alexander duality (which states that
$H_{n+1+s}(S^{2n+2}\sminus\GL;\C_\mu)$ is isomorphic to
$H_{n+1-s}(S^{2n+2}\sminus\GL;\C_{\mu})$) 
the desired inequality follows. 
\end{proof}

\begin{lem}\label{Lemma3Th}
\begin{multline}
\sum_{s=0}^{2r}(-1)^s
\dim_\C H_{n+1+s}(S^{2n+2}\sminus\GL,S^{2n+1}\sminus L;\C_\mu)\\
\le
\sum_{s=0}^{2r}(-1)^s\dim_P H_{n-s}(\GL\sminus L;P)
\end{multline}
\end{lem}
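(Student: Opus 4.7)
The plan is to mirror the proof of Theorem \ref{Th7}: first replace $\C_\mu$-coefficients by $P$-coefficients using Theorem \ref{EstTwHom}, and then carry the relative $P$-homology of the pair $(S^{2n+2}\sminus\GL,\,S^{2n+1}\sminus L)$ to the $P$-homology of $\GL\sminus L$ via Alexander duality and the long exact sequence of a pair. In fact the $P$-coefficient estimate will hold dimensionwise, not only after taking alternating sums.

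The first step is a direct application of Theorem \ref{EstTwHom} (exactly as in \eqref{eq4PfTh7}), which reduces the lemma to showing that
$$\dim_P H_{n+1+s}(S^{2n+2}\sminus\GL,\,S^{2n+1}\sminus L; P)=\dim_P H_{n-s}(\GL\sminus L; P)$$
for every $s$ with $0\le s\le 2r$. For the second step I would decompose $S^{2n+2}$ along its equator $S^{2n+1}$ into two closed balls $S^{2n+2}_\pm$ and set $\GL_\pm=\GL\cap S^{2n+2}_\pm$. Transversality of $\GL$ to $S^{2n+1}$ provides a bi-collar of $S^{2n+1}\sminus L$ in $S^{2n+2}\sminus\GL$, and the two halves $S^{2n+2}_\pm\sminus\GL_\pm$ meet precisely in $S^{2n+1}\sminus L$; the quotient therefore splits as a wedge, giving
$$H_k(S^{2n+2}\sminus\GL,\,S^{2n+1}\sminus L; P)\cong H_k(S^{2n+2}_+\sminus\GL_+,\,S^{2n+1}\sminus L; P)\oplus H_k(S^{2n+2}_-\sminus\GL_-,\,S^{2n+1}\sminus L; P).$$

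On each summand the data $(S^{2n+2}_\pm,\GL_\pm,L)$ is exactly of the form $(D^{2n+2},F,L)$ used in the proof of Theorem \ref{Th7}, so the same chain of isomorphisms applies:
$$H_k(S^{2n+2}_\pm\sminus\GL_\pm,\,S^{2n+1}\sminus L; P)\cong H^{2n+2-k}(S^{2n+2}_\pm,\GL_\pm; P)\cong H^{2n+1-k}(\GL_\pm; P),$$
the first isomorphism by Alexander duality in the ball, and the second by the exact sequence of the pair together with the contractibility of $S^{2n+2}_\pm$. Setting $k=n+1+s$ turns this into $H^{n-s}(\GL_\pm;P)$, whose dimension over the field $P$ equals $\dim_P H_{n-s}(\GL_\pm;P)$. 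Since $\GL_\pm\sminus L$ is the ``interior'' of $\GL_\pm$, it deformation retracts onto $\GL_\pm$, so $H_{n-s}(\GL_\pm;P)\cong H_{n-s}(\GL_\pm\sminus L;P)$. Summing the two hemispheres and using the disjoint decomposition $\GL\sminus L=(\GL_+\sminus L)\sqcup(\GL_-\sminus L)$ yields $\dim_P H_{n-s}(\GL\sminus L; P)$, completing the identification.

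The main technical obstacle is that $\GL=\bigcup_i\GL_i$ has transverse self-intersections, so $\GL_\pm$ is not a genuine manifold with boundary. Both the Alexander duality step and the deformation retraction $\GL_\pm\sminus L\simeq\GL_\pm$ must therefore be interpreted at the CW level; this is exactly the same subtlety encountered in the proof of Theorem \ref{Th7}, and it is handled in the same way, using only the local product structure of bi-collars along $L$ and along the transverse strata, and not any global manifold structure of $\GL$.
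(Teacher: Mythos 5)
Your proof is correct and follows the paper's strategy: first pass from $\C_\mu$ to $P$ via Theorem~\ref{EstTwHom}, then identify $\dim_P H_{n+1+s}(S^{2n+2}\sminus\GL,S^{2n+1}\sminus L;P)$ with $\dim_P H_{n-s}(\GL\sminus L;P)$ by a duality argument. Where you and the paper diverge is only in how the duality is packaged: the paper invokes a single ``Poincar\'e duality'' isomorphism $H_{n+1+s}(S^{2n+2}\sminus\GL,S^{2n+1}\sminus L;P)\cong H^{n+1-s}(S^{2n+2}\sminus S^{2n+1},\GL\sminus L;P)$ and then trades the contractible open hemispheres for $\GL\sminus L$ via the pair sequence, whereas you make the equatorial splitting explicit first and then reuse the ball Alexander duality already appearing in the proof of Theorem~\ref{Th7}; since the paper's target pair $(S^{2n+2}\sminus S^{2n+1},\GL\sminus L)$ is itself a disjoint union of the two hemisphere pairs, the two routes are essentially the same computation, and yours has the advantage of reducing the unexplained duality to one already established. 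One small slip in wording: $\GL_\pm\sminus L$ cannot ``deformation retract onto'' the larger set $\GL_\pm$; what you mean (and what is true, by the collar of $L$ in $\GL_\pm$) is that the inclusion $\GL_\pm\sminus L\hookrightarrow\GL_\pm$ is a homotopy equivalence, which gives the isomorphism of homology you use.
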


\begin{proof}
By Theorem \ref{EstTwHom}
\begin{multline}
\sum_{s=0}^{2r}(-1)^s
\dim_\C H_{n+1+s}(S^{2n+2}\sminus\GL,S^{2n+1}\sminus L;\C_\mu)\\
\le
\sum_{s=0}^{2r}(-1)^s
\dim_P H_{n+1+s}(S^{2n+2}\sminus\GL,S^{2n+1}\sminus L;P).
\end{multline}
By Poincar\'e duality (cf. \ref{sT.4.3}),
$H_{n+1+s}(S^{2n+2}\sminus\GL,S^{2n+1}\sminus L;P)$
is isomorphic to $H^{n+1-s}(S^{2n+2}\sminus S^{2n+1},\GL\sminus L;P)$.
The latter is isomorphic to $H^{n-s}(\GL\sminus L;P)$. By the universal
coefficients formula, $H^{n-s}(\GL\sminus L;P)$ is isomorphic to 
$H_{n-s}(\GL\sminus L;P)$.
\end{proof}

\begin{lem}\label{Lemma4Th}
\begin{multline} 
\sum_{s=-2r+1}^{2r-1}(-1)^s\dim_\C H_{n+1+s}(S^{2n+2}\sminus\GL;\C_\mu)\\
\le \sum_{s=-2r+1}^{2r-1}(-1)^s\dim_P H_{n-s}(\GL;P)
\end{multline}
\end{lem}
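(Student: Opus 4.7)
The plan is to prove Lemma \ref{Lemma4Th} by the same three–step recipe used in Lemma \ref{Lemma3Th}, simply specialized to the absolute (rather than relative) situation, since the sum here involves only the complement $S^{2n+2}\sminus\GL$ and not the pair $(S^{2n+2}\sminus\GL,\,S^{2n+1}\sminus L)$.

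First, I would apply Theorem \ref{EstTwHom} to pass from twisted $\C$-coefficients on the complement to untwisted $P$-coefficients, obtaining
\[
\sum_{s=-2r+1}^{2r-1}(-1)^s\dim_\C H_{n+1+s}(S^{2n+2}\sminus\GL;\C_\mu)
\le
\sum_{s=-2r+1}^{2r-1}(-1)^s\dim_P H_{n+1+s}(S^{2n+2}\sminus\GL;P).
\]
This is precisely the alternating-sum form of \ref{EstTwHom} already invoked at the start of the proof of Lemma \ref{Lemma3Th}.

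Second, I would apply Alexander duality in $S^{2n+2}$ to the closed subspace $\GL$, which yields
\[
H_{n+1+s}(S^{2n+2}\sminus\GL;P)\cong H^{2n+1-(n+1+s)}(\GL;P)=H^{n-s}(\GL;P),
\]
and then the universal coefficient formula, legitimate because $P$ is a field, identifies $H^{n-s}(\GL;P)$ with $H_{n-s}(\GL;P)$. Substituting these isomorphisms termwise into the previous display, with the sign $(-1)^s$ preserved on both sides, produces exactly the bound claimed in the lemma.

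The whole argument is a direct transliteration of Lemma \ref{Lemma3Th}'s proof; the one genuine simplification is that the absence of a boundary piece means Alexander duality lands on $\GL$ rather than on $\GL\sminus L$, and no excision step or long-exact-sequence identification is needed. Accordingly I do not anticipate a substantive obstacle. The only thing worth double-checking is the bookkeeping of indices, namely that the substitution $n+1+s\mapsto n-s$ sends the range $-2r+1\le s\le 2r-1$ to itself and preserves the sign $(-1)^s$; both are immediate.
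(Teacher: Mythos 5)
Your proposal is correct and follows essentially the same route as the paper. The only cosmetic difference is that you invoke Alexander duality in $S^{2n+2}$ directly to get $H_{n+1+s}(S^{2n+2}\sminus\GL;P)\cong H^{n-s}(\GL;P)$, whereas the paper factors this through Poincar\'e duality, $H_{n+1+s}(S^{2n+2}\sminus\GL;P)\cong H^{n+1-s}(S^{2n+2},\GL;P)$, and then uses the long exact sequence of the pair $(S^{2n+2},\GL)$ to drop to $H^{n-s}(\GL;P)$; these two steps together are just the standard derivation of Alexander duality, so the content is identical. Your index check is also sound: since $r\le n/2$, the degrees $n-s$ for $s\in[-2r+1,\,2r-1]$ stay in the range $[1,\,2n-1]$, so reduced versus unreduced (co)homology does not matter, and the range $-2r+1\le s\le 2r-1$ is symmetric so the sign bookkeeping goes through.
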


\begin{proof} 
By Theorem \ref{EstTwHom}
\begin{multline} 
\sum_{s=-2r+1}^{2r-1}(-1)^s\dim_\C H_{n+1+s}(S^{2n+2}\sminus\GL;\C_\mu)\\
\le \sum_{s=-2r+1}^{2r-1}(-1)^s\dim_P H_{n+1+s}(S^{2n+2}\sminus\GL;P).
\end{multline}
By Poincar\'e duality,
$H_{n+1+s}(S^{2n+2}\sminus\GL;P)$ is isomorphic to 
$H^{n+1-s}(S^{2n+2},\GL;P)$.
From the  sequence of pair $(S^{2n+2},\GL)$ it follows that 
$H^{n+1-s}(S^{2n+2},\GL;P)$ is isomorphic 
to $H^{n-s}(\GL;P)$. By the universal coefficient formula, 
$H^{n-s}(\GL;P)$ is isomorphic
to $H_{n-s}(\GL;P)$. 
\end{proof}
\medskip

\noindent
{\bf Proof of Theorem \ref{Th-Slice}.} 
Sum up the inequalities of the last three Lemmas.\qed



\appendix
  \renewcommand\thesection{\appendixname}
  \renewcommand\thesubsection{\appendixname\ \Alph{subsection}}%
  \renewcommand\theequation{\Alph{subsection}.\arabic{equation}} %

\section{Twisted homology}

\vspace{10pt}

\subsection{Twisted coefficients and chains}\label{sT.1}

\subsubsection{Local coefficient system}\label{sT.1.1} 
Let $X$ be a topological space, and $\xi$ be a $\C$-bundle over $X$
with a fixed flat connection. 

Here by a {\sfit connection\/} we mean operations of
{\sfit parallel transport\/}: for any path $s$ in $X$ connecting 
points $x$ and
$y$ the parallel transport $T_s$ is an isomorphism from the fiber $\C_x$
over $x$ to the fiber $\C_y$ over $y$, such that the parallel transport
along product of paths equals the composition of parallel transports along the
factors. In formula: $T_{uv}=T_v\circ T_u$. A connection is flat, if
the parallel transport isomorphism does not change when the path is
replaced by a homotopic path.

A flat connection in a bundle $\xi$ over a simply connected $X$ gives a
trivialization of $\xi$. 

Another name for $\xi$ is a {\sfit local
coefficient system\/} with fiber $\C$. 

\subsubsection{Monodromy representation}\label{sT.1.2} 
Recall that for a path-connected
locally contractible $X$ (and in more general situations, which would
not be of interest here) it is defined by the {\sfit monodromy
reprensentation\/} $\pi_1(X,x_0)\to\C^\times$, where 
$\C^\times=\C\sminus0$ is the
multiplicative group of $\C$. The monodromy representation assigns to
$\Gs\in\pi_1(X,x_0)$ a complex number $\Gz$ such that the parallel 
transport 
isomorphism along a loop which represents $\Gs$ is multiplication by $\Gz$.

Since $\C^\times$ is commutative, a
homomorphism $\pi_1(X,x_0)\to\C^\times$ factors through the 
abelianization
$\pi_1(X,x_0)\to H_1(X)$. Thus a local coefficient system with fiber
$\C$ is defined also by a homology version $\mu:H_1(X)\to\C^\times$ 
of the monodromy  representation, which can be considered also as a 
cohomology class belonging to $H^1(X;\C^\times)$. 

The local coefficient system defined by a monodromy representation
$\mu:H_1(X)\to\C^\times$ is denoted by $\C^{\mu}$. Sometimes instead of
$\mu$ we will write data which defines $\mu$, for example the images
under $\mu$ of generators of $H_1(X)$ selected in a special way.

\subsubsection{Twisted singular chains}\label{sT.1.3} 
Homology groups $H_n(X;\xi)$ of $X$ with coefficients in $\xi$ is a
classical invariant studied in algebraic topology. It is an immediate
generalization of $H_n(X;\C)$. Hence it is quite often ignored in
textbooks on homology theory, I recall the singular version of 
the definition.

Recall that a singular $p$-dimensional chain 
of $X$ with coefficients in $\C$ is a formal finite linear combination 
of singular simplices $f_i:T^p\to X$ with complex coefficients. 

A singular chain of $X$ with coefficients in $\xi$ is also a formal
finite linear combination of singular simplices, but each singular
simplex $f_i:T^p\to X$ appears 
in it with a coefficient taken from the fiber $\C_{f_i(c)}$ of $\xi$ 
over $f_i(c)$,
where $c$ is the baricenter of $T^p$. Of course, all
the fibers of $\xi$ are isomorphic to $\C$. So, a chain with
coefficients in $\xi$ can be identified with a chain with coefficients
in $\C$, provided the isomorphisms $\C_{f_i(c)}\to\C$ are selected. 
But they are not. 

All singular $p$-chains of $X$ with coefficients in $\xi$ form a 
complex vector space $C_p(X;\xi)$. 

The boundary of such a chain is defined by the usual formula, but one 
needs to bring the coefficient
from the fiber over  $f_i(c)$ to the fibers over $f_i(c_i)$, where $c_i$
is the baricenter of the $i$th face of $T^p$. 
For this, one may use translation along the composition with $f_i$ of 
any path connecting $c$ to $c_i$ in $T^p$: since $T^p$ is simply
connected and the connection of $\xi$ is flat, the result does not
depend on the path. 

These chains and boundary operators form a complex. Its homology is
called {\sfit homology with coefficients in\/} $\xi$ and denoted by 
$H_p(X;\xi)$. 

Homology with coefficients in the local coefficient system corresponding 
to the trivial monodromy representation $1:H_1(X)\to\C^\times$ coincides 
with homology with coefficients in $\C$.

\subsubsection{Twisted cellular chains}\label{sT.1.4} 
It is possible to calculate the homology with coefficients in a 
local coefficient system using cellular decomposition. Namely, a
$p$-dimensional cellular chain of a cw-complex $X$ with coefficients in
a local coefficient system $\xi$ is a formal finite linear combination
of $p$-dimensional cells in which a coefficient at a cell belongs to 
the fiber over a point of the cell. It does not matter which point is
this, because fibers over different points in a cell are identified via
parallel transport along paths in the cell: any two points in a cell 
can be connected in the cell by a path unique up to homotopy.

In order to describe the boundary operator, let me define the {\sfit
incidence number\/} $(z\Gs_x:\tau)_y\in\C_y$ where $\Gs$ is a $p$-cell, 
$\tau$ is a $(p-1)$-cell, $z\in\C_x$, $x\in\Gs$, $y\in\tau$.
The boundary operator is then defined by the incidence numbers:
$$\p(z\Gs)=\sum_\tau(z\Gs_x:\tau)_y\tau.$$

Let  $f:D^p\to X$ be a characteristic map for $\Gs$. Assume that a point
$y$ in $(p-1)$-cell $\tau$ is a regular value for $f$. This means that  
$y$ has a neighborhood $U$ in $\tau$ such that $f^{-1}(U)\subset
S^{p-1}\subset D^p$ is the union of finitely many balls mapped by $f$
homeomorphically onto $U$. Connect
$f^{-1}(x)\in D^p$ with all the points of $f^{-1}(y)$ by straight paths.
Compositions of these paths with $f$ are paths $s_1$,\dots $s_N$ 
connecting $x$ with $y$. Then put 
$$
(z\Gs:\tau)_y=\sum_{i=1}^N\Ge_iT_{s_i}(z)
$$
where $T_{s_i}$ is a parallel transport operator and 
$\Ge_i=+1$ or $-1$ according to whether $f$ preserves or reverses
the orientation on the $i$th ball out of $N$ balls constituting
$f^{-1}(U)$.

\subsection{Twisted acyclicity}\label{sT.2} 
\vspace{6pt}

\subsubsection{Acyclicity of circle}\label{sT.2.1} 
According to one
of the most fundamental properties of homology, the dimension of 
 $H_0(X;\C)$ 
is equal to the number of path-connected components of $X$. In
particular, $H_0(X;\C)$ does not vanish, unless $X$ is empty.

This is not the case for twisted homology. A crucial example is the 
circle $S^1$.  Let $\mu:H_1(S^1)\to\C^\times$ maps the generator
$1\in\Z=H_1(S^1)$ to $\Gz\in\C^\times$. 

\begin{ATh}{\bfit Twisted acyclicity of circle.}\label{1.A}
$H_*(S^1;\C^\mu)=0$, iff \ $\Gz\ne 1$.
\end{ATh}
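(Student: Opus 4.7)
The plan is to reduce the claim to a one-line calculation in the twisted cellular chain complex of \ref{sT.1.4}. Equip $S^1$ with its standard CW structure: one $0$-cell $v$ and one $1$-cell $e$ whose characteristic map $f:D^1\to S^1$ sends $\p D^1=\{\pm 1\}$ to $v$ and whose interior traces once the loop representing the generator $1\in H_1(S^1)$.

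First I identify the cellular chain groups. Inside a simply connected cell the fiber of $\C^\mu$ is canonically trivialised by parallel transport, so choosing a basepoint in each cell together with a basis vector in its fiber identifies $C_0(S^1;\C^\mu)\cong\C$ and $C_1(S^1;\C^\mu)\cong\C$. The entire computation thus reduces to determining the single boundary map $\p:C_1\to C_0$.

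Next I apply the incidence-number recipe of \ref{sT.1.4} to $\p e$. The point $v$ has exactly two preimages under $f$, the two endpoints of $D^1$; joining the basepoint of $e$ to them by straight segments and composing with $f$ produces two paths $s_+,s_-$ in $S^1$ from the basepoint to $v$. Their concatenation (with one reversed) is a loop based at the chosen point representing the generator of $\pi_1(S^1)$, so by the very definition of the monodromy representation the transports $T_{s_+}$ and $T_{s_-}$ differ by a factor of $\Gz$. The two endpoints enter with opposite signs because $f$ is orientation-preserving at one and orientation-reversing at the other. Normalising so that one of the parallel transports is the identity, the incidence formula gives $\p e=(\Gz-1)\,v$.

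The conclusion is then immediate: the complex collapses to $0\to\C\xrightarrow{\Gz-1}\C\to 0$, which is acyclic exactly when $\Gz\ne 1$; and when $\Gz=1$ the local system is trivial, so $H_0(S^1;\C^\mu)=H_1(S^1;\C^\mu)=\C$ and the ``iff'' is verified in both directions. The only delicate point is pinning down the sign and the monodromy factor in the incidence computation, but once the two preimages are treated symmetrically and the loop they form is identified with the generator of $\pi_1(S^1)$, the factor $\Gz-1$ is forced and no further calculation is needed.
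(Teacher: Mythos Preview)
Your proof is correct and follows essentially the same approach as the paper: both use the minimal CW structure on $S^1$ with one $0$-cell and one $1$-cell, compute the twisted cellular boundary map as multiplication by $\Gz-1$, and conclude acyclicity precisely when $\Gz\ne1$. You simply spell out in more detail, via the incidence-number recipe of \ref{sT.1.4}, why the boundary is $(\Gz-1)v$, whereas the paper states this formula directly.
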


\begin{proof}The simplest cw-decomposition of $S^1$
consists of two cells, one-dimensional $\Gs_1$ and zero-dimensional
$\Gs_0$. One can easily see that $\p\Gs_1=(\Gz-1)\Gs_0$. Hence 
$\p:C_1(S^1;\C^\mu)\to
C_0(S^1;\C^\mu)$ is an isomorphism, iff $\Gz\ne0$. 
\end{proof}

\subsubsection{Vanishing of twisted homology}\label{sT.2.2} 
\begin{Acor}\label{1.B}
Let $X$ be a path connected space and 
$\mu:H_1(S^1\times X)\to\C^\times$ be a homomorphism. 
Denote by $\Gz$ the image under $\mu$ of the homology class 
realized by a fiber $S^1\times \text{point}$. 
Then $H_*(S^1\times X;\C^\mu)=0$, if \
 $\Gz\ne0$. 
\end{Acor}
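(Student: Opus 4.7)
The plan is to reduce to Theorem \ref{1.A} by exploiting the product structure.

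First, since $X$ is path-connected, Künneth for integral singular homology gives a canonical splitting
\[
H_1(S^1\times X)\;\cong\;H_1(S^1)\oplus H_1(X),
\]
so the monodromy $\mu$ decomposes as $\mu=\mu_1+\mu_2$, where $\mu_1:H_1(S^1)\to\C^\times$ sends the generator to $\zeta$ and $\mu_2:H_1(X)\to\C^\times$ is the restriction to the $X$-factor. Correspondingly, the local coefficient system $\C^\mu$ on $S^1\times X$ is the external tensor product $\C^{\mu_1}\boxtimes\C^{\mu_2}$ of the two local systems on the factors.

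Next, equip $S^1$ with the two-cell decomposition used in \ref{1.A} (one $0$-cell $\sigma_0$ and one $1$-cell $\sigma_1$ with $\partial\sigma_1=(\zeta-1)\sigma_0$ in $C_*(S^1;\C^{\mu_1})$), give $X$ any CW structure, and take the product CW structure on $S^1\times X$. By the standard computation of the twisted cellular differential on a product (parallel transport along a cell of the product being the tensor product of parallel transports along projections), the cellular chain complex $C_*(S^1\times X;\C^\mu)$ is the total complex of the double complex with exactly two nonzero columns
\[
C_q(X;\C^{\mu_2})\;\xrightarrow{\;\zeta-1\;}\;C_q(X;\C^{\mu_2}),
\]
where the horizontal arrow is induced by $\partial\sigma_1=(\zeta-1)\sigma_0$ and the vertical differentials in each column are $\pm\partial_X$ on $C_*(X;\C^{\mu_2})$.

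Now run the spectral sequence obtained by filtering this double complex by the $S^1$-direction: the $E^1$-page is the horizontal homology, and since $\zeta\ne 1$, multiplication by $\zeta-1$ is an isomorphism on every $C_q(X;\C^{\mu_2})$, so $E^1=0$. Therefore the total homology vanishes, i.e.\ $H_*(S^1\times X;\C^\mu)=0$. The only real point of care is the identification of the twisted cellular complex of the product with the claimed double complex, but the monodromy factor $\zeta-1$ there is exactly what appears in the proof of \ref{1.A} and does not interact with the horizontal direction because a fiber $S^1\times\{\mathrm{pt}\}$ contributes only to $\mu_1$; the argument is essentially just the Künneth formula $H_n(S^1\times X;\C^\mu)=\bigoplus_{p+q=n}H_p(S^1;\C^{\mu_1})\otimes_\C H_q(X;\C^{\mu_2})$ combined with the vanishing of the first factor provided by \ref{1.A}.
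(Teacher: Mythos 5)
Your argument is essentially the paper's: split $\mu$ into $\mu_1$ on $H_1(S^1)$ and $\mu_2$ on $H_1(X)$, identify $\C^\mu$ with the external product $\C^{\mu_1}\boxtimes\C^{\mu_2}$, and conclude by the K\"unneth formula for local systems over $\C$ together with Theorem \ref{1.A}. Your cellular double-complex and spectral-sequence unpacking is a correct and more explicit way to see that K\"unneth isomorphism, but note that it presupposes a CW structure on $X$, whereas the statement is for an arbitrary path-connected space; the singular-chain K\"unneth formula that you invoke in your final sentence (as the paper does) is what actually covers the general case, so either lean on it directly or add a CW-approximation step if you want the cellular computation to stand on its own.
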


\begin{proof} Since $H_1(S^1\times X)=H_1(S^1)\times H_1(X)$, the
homomorphism $\mu$ can be presented as product of homomorphisms
$\mu_1:H_1(S^1)\to\C^\times$ and $\mu_2:H_1(X)\to\C^\times$ which 
can be obtained
as compositions of $\mu$ with the inclusion homomorphisms. Thus
$\C^\mu=\C^{\mu_1}\otimes\C^{\mu_2}$, and we can apply K\"unneth
formula 
$$H_n(S^1\times X;\C^\mu)=\sum_{p=0}^n H_p(S^1;\C^{\mu_1})\otimes
H_{n-p}(X;\C^{\mu_2})$$
and refer to Theorem \ref{1.A}.
\end{proof}

\begin{Acor}\label{1.C}
Let $B$ be a path connected space, $p:X\to B$ a locally trivial
fibration with fiber $S^1$. Let $\mu:H_1(X)\to\C^\times$ be
a homomorphism. Denote by $\Gz$ the image under $\mu$ of homology class 
realized by a fiber of $p$. Then $H_*(X;\C^\mu)=0$, if \
 $\Gz\ne0$. 
\end{Acor}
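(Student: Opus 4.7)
The plan is to reduce to the trivial-bundle case already handled by Corollary 1.B. The quickest route is the Leray--Serre spectral sequence of the fibration $p:X\to B$ with coefficients in $\C^\mu$. Its $E^2$-page is $H_i(B;\mathcal{H}_j)$, where $\mathcal{H}_j$ is the local coefficient system on $B$ whose stalk at $b\in B$ is the twisted homology $H_j(p^{-1}(b);\C^\mu|_{p^{-1}(b)})$ of the fiber. For each $b$ the fiber is a circle, and the monodromy of $\C^\mu|_{p^{-1}(b)}$ around that circle is exactly the image $\Gz$ of the fiber class under $\mu$, which is not $1$. Theorem \ref{1.A} therefore forces every stalk of $\mathcal{H}_j$ to vanish, so the entire $E^2$-page is zero and $H_*(X;\C^\mu)=0$.

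For a more elementary argument that stays within the toolkit of Corollary \ref{1.B}, I would instead argue by Mayer--Vietoris over a good open cover. Choose an open cover $\{U_\alpha\}$ of $B$ by path-connected sets with path-connected finite intersections and over each of which $p$ is trivializable; such a cover exists whenever $B$ admits a good cover, which is the case in all the applications in this paper. For any finite intersection $V=U_{\alpha_0}\cap\dots\cap U_{\alpha_k}$, the restricted fibration is $V\times S^1$ and the restriction of $\C^\mu$ still sends the fiber class to $\Gz\ne 1$, so Corollary \ref{1.B} gives $H_*(p^{-1}(V);\C^\mu)=0$. The generalized Mayer--Vietoris spectral sequence for the open cover $\{p^{-1}(U_\alpha)\}$ of $X$ then has identically vanishing $E^1$-page, and again $H_*(X;\C^\mu)=0$.

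The main obstacle in either variant is only the bookkeeping needed to make the standard Leray--Serre or Mayer--Vietoris machinery valid with a local coefficient system on the total space; one must verify, for example, that the sheaf $\mathcal{H}_j$ on $B$ produced by restricting $\C^\mu$ fiberwise is a genuine local coefficient system and that the spectral sequence converges as expected. This is routine, and in fact the obstacle is rendered harmless here, because the input from the fibers is already zero and any reasonable assembly of zero produces zero.
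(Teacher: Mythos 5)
Your first variant is precisely the paper's proof: the paper deduces the corollary ``from Theorem~\ref{1.A} via the spectral sequence of fibration $p$,'' which is exactly the Leray--Serre argument you describe — the $E^2$-page has stalks $H_j(\text{fiber};\C^\mu|_{\text{fiber}})$ that all vanish because the monodromy along the fiber is $\Gz\ne1$, so the whole spectral sequence collapses to zero. (You also correctly read the hypothesis as $\Gz\ne1$ rather than the typographical $\Gz\ne0$ in the statement, consistent with Theorem~\ref{1.A}.) Your second, Mayer--Vietoris variant is a genuinely different route: it replaces the fibration spectral sequence by a \v{C}ech-type spectral sequence over a good cover of $B$ and feeds in Corollary~\ref{1.B} (the product case) rather than Theorem~\ref{1.A} directly. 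This is slightly more elementary and sidesteps having to verify that the fiberwise homology sheaf $\mathcal{H}_j$ on $B$ is a bona fide local system, at the mild cost of assuming $B$ admits a good cover — harmless in all the paper's applications, where $B$ is a manifold or finite complex. Both routes are correct; the first coincides with the paper's.
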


\begin{proof} It follows from Theorem \ref{1.A} via the spectral sequence of
fibration $p$.\break
\end{proof}

\subsection{Estimates of twisted homology}\label{sT.3} 
\vspace{6pt}

\subsubsection{Equalities underlying the Morse inequalities}\label{sT.3.1}
\begin{Alem}\label{EqUnderlMI}
For a complex $C:\dots\to C_i\overset{\p_i}\to
C_{i-1}\to$ of finite dimensional vector spaces
over a field $F$
\begin{multline}\label{dimH}
 \sum_{s=r}^{2n+r}(-1)^{s-r}\dim_FH_s(C)=\\
 \sum_{s=r}^{2n+r}(-1)^{s-r}\dim_FC_s-\rnk\p_{r-1}-\rnk\p_{2n+r}.
\end{multline}  
\end{Alem}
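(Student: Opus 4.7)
The plan is to reduce the claim to a pointwise identity expressing $\dim_F H_s(C)$ in terms of $\dim_F C_s$ and two adjacent boundary ranks, and then take the alternating sum so that all interior ranks telescope away. The elementary identity
\begin{equation*}
\dim_F H_s(C)=\dim_F C_s-\rnk\partial_s-\rnk\partial_{s+1}
\end{equation*}
is immediate from the two short exact sequences $0\to\ker\partial_s\to C_s\to\operatorname{Im}\partial_s\to 0$ and $0\to\operatorname{Im}\partial_{s+1}\to\ker\partial_s\to H_s(C)\to 0$, using $\dim\ker\partial_s=\dim C_s-\rnk\partial_s$ and $\dim H_s(C)=\dim\ker\partial_s-\rnk\partial_{s+1}$. (Relabelling the boundary operators to match the indexing convention of \eqref{dimH} shifts both subscripts on the right-hand side by $-1$, which accounts for the appearance of $\rnk\partial_{r-1}$ and $\rnk\partial_{2n+r}$ rather than $\rnk\partial_r$ and $\rnk\partial_{2n+r+1}$.)

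Substituting this pointwise formula into the left-hand side of \eqref{dimH}, the $\dim_F C_s$ contribution already matches the first term on the right-hand side, so what must be analysed is
\begin{equation*}
\sum_{s=r}^{2n+r}(-1)^{s-r}\rnk\partial_s+\sum_{s=r}^{2n+r}(-1)^{s-r}\rnk\partial_{s+1}.
\end{equation*}
Re-indexing $s\mapsto s+1$ in the second sum shifts it to run over $s=r+1,\dots,2n+r+1$ with an additional sign; when added to the first sum, every rank $\rnk\partial_{r+1},\dots,\rnk\partial_{2n+r}$ appears exactly twice with opposite signs and cancels. Only the two endpoint contributions survive, and because the range $r,\dots,2n+r$ has $2n+1$ terms (an odd number), both survivors enter with the same sign, yielding exactly the two correction terms on the right-hand side of \eqref{dimH}.

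A stylistic alternative, matching the presentation of Lemmas \ref{Lemma1Th7} and \ref{Lemma1Th8}, would be to assemble the splittings $C_s=\ker\partial_s\oplus W_s$ and $\ker\partial_s=\operatorname{Im}\partial_{s+1}\oplus H_s(C)$ into a single truncated exact sequence whose inner terms are the $C_s$ for $r\le s\le 2n+r$, whose inserted terms are the homology groups $H_s(C)$, and whose two outer terms encode the boundary ranks entering and leaving the range; the vanishing of the Euler characteristic of this exact sequence is then a reformulation of \eqref{dimH}. Either route is routine algebra. The only genuine obstacle is sign bookkeeping: one must keep careful track of the parity of the length of the range to ensure the two surviving endpoint ranks appear with the correct (identical) sign demanded by the statement.
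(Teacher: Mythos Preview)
Your proof is correct and follows essentially the same route as the paper: derive the pointwise identity $\dim_F H_s(C)=\dim_F C_s-\rnk\partial_s-\rnk\partial_{s+1}$ (which the paper calls the case $n=0$), then take the alternating sum and let the interior rank terms telescope. Your parenthetical remark about the index shift is apt: with the convention $\partial_i:C_i\to C_{i-1}$ displayed in the statement, the telescoped endpoints are $\rnk\partial_r$ and $\rnk\partial_{2n+r+1}$, so the subscripts in \eqref{dimH} are off by one unless one silently adopts the convention $\partial_i:C_{i+1}\to C_i$; the paper's own proof exhibits the same discrepancy between the pointwise formula it derives and the stated endpoint indices.
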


\begin{proof} First, prove inequality \eqref{dimH} for $n=0$.
Since $H_s(C)=\Ker\p_s/\Im\p_{s+1}$, we have
$\dim_FH_s(C)=\dim\Ker\p_s-\dim_F\Im\p_{s+1}$. Further,
$\dim_F\Im\p_{s+1}=\rnk\p_{s+1}$, and
$\dim_F\Ker\p_s=\dim_FC_s-\rnk\p_s$. It follows
\begin{equation}\label{dimHs}
 \dim_FH_s(C)=\dim_FC_s-\rnk\p_s-\rnk\p{s+1}
\end{equation}
This is a special case of \eqref{dimH} with $n=0$, $r=s$.

The general case follows from it: make alternating summation of
\eqref{dimHs} for $s=r,\dots,2n+s$.
\end{proof}

\subsubsection{Algebraic Morse type inequalities}\label{sT.3.2} 
\begin{Alem}\label{AlgLem}
Let $P$ and $Q$ be fields, $R$ be a subring of $Q$ and let $h:R\to P$ be 
a ring homomorphism. Let $C: \dots\to C_p\to C_{p-1}\to\dots\to C_1\to C_0$
be a complex of free finitely generated $R$-modules. Then for any 
$n$ and $r$ 
$$\sum_{s=r}^{2n+r}(-1)^{s-r}\dim_QH_s(C\otimes_RQ)\le
 \sum_{s=r}^{2n+r}(-1)^{s-r}\dim_PH_s(C\otimes_hP)
$$
\end{Alem}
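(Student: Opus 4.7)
The plan is to apply Lemma~\ref{EqUnderlMI} twice, once with $F = Q$ to the complex $C \otimes_R Q$ and once with $F = P$ to the complex $C \otimes_h P$, and then take the difference. In both cases the lemma expresses the alternating sum of homology dimensions as
$$\sum_{s=r}^{2n+r}(-1)^{s-r}\dim_F H_s = \sum_{s=r}^{2n+r}(-1)^{s-r}\dim_F (C_s \otimes) - \rnk(\p_{\text{left}}) - \rnk(\p_{\text{right}}),$$
so the desired inequality amounts to showing that (i) the alternating chain-dimension sums match, and (ii) the two endpoint rank terms are larger over $Q$ than over $P$.

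For (i), since each $C_s$ is a free $R$-module of some finite rank $k_s$, the extension $C_s \otimes_R Q$ is free of rank $k_s$ over $Q$ and $C_s \otimes_h P$ is free of rank $k_s$ over $P$. Hence $\dim_Q(C_s \otimes_R Q) = k_s = \dim_P(C_s \otimes_h P)$, and the two alternating chain-dimension sums in the formula above are identical.

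For (ii), I would compare ranks of boundary operators one at a time, showing
$$\rnk_Q \bigl(\p_s \otimes_R Q\bigr) \;\ge\; \rnk_P \bigl(\p_s \otimes_h P\bigr).$$
The argument is the standard minor characterization of rank. Fix a matrix representative $M_s$ of $\p_s$ with entries in $R$. The rank of a matrix over a field is the largest $k$ such that some $k \times k$ minor is nonzero. Each $k\times k$ minor of $M_s$ is an element $m \in R$; the corresponding minor over $Q$ is $m$ viewed via the inclusion $R \hookrightarrow Q$, and the corresponding minor over $P$ is $h(m)$. If $h(m) \ne 0$ in $P$ then $m \ne 0$ in $R$ (because $h(0) = 0$), and hence $m \ne 0$ in $Q$. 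Thus every witness to the $P$-rank is also a witness to the $Q$-rank, giving the claimed inequality.

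Combining (i) and (ii): subtracting the two instances of the formula, the $\dim C_s$ contributions cancel, and the remaining difference is the sum of two endpoint rank terms, which is nonnegative by (ii); this is exactly the direction needed, since the rank terms appear with a minus sign. The main potential obstacle is the rank comparison in (ii), but it is really just a matrix-rank observation: the embedding $R \hookrightarrow Q$ detects nonvanishing of minors faithfully, whereas $h: R \to P$ can only kill them, never create new nonzero ones.
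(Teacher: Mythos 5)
Your proof is correct and follows essentially the same route as the paper: apply Lemma~\ref{EqUnderlMI} to both $C\otimes_RQ$ and $C\otimes_hP$, observe that the free-rank terms agree, and show $\rnk\p^Q_i\ge\rnk\p^P_i$ by noting that minors computed over $P$ are the $h$-images of minors computed in $R\subset Q$. The only difference is that you spell out the minor argument (via the contrapositive: a nonzero image under $h$ forces a nonzero minor in $R$, hence in $Q$) where the paper states it more tersely.
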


 Thus, the greater ranks of differentials, the smaller
$$\sum_{s=r}^{2n+r}(-1)^{s-r}\dim_FH_s(C).$$

\begin{proof} 
Choose free bases in modules $C_i$. Let $M_i$ be the matrix representing
 $\p_i:C_i\to C_{i-1}$ in these bases. The same matrix represents the 
differential $\p^Q_i$ of $C\otimes_RQ$.  The matrix obtained from $M_i$ by
replacement the entries with their images under $h$ represents the
differential $\p^P_i$ of $C\otimes_hP$. 
The minors of the latter matrix  are the images of the former one
under $h$. Consequently,  the $\rnk\p^Q_i\ge\rnk\p^P_i$. 

By Lemma \ref{EqUnderlMI}
\begin{multline}\label{eqQ}
 \sum_{s=r}^{2n+r}(-1)^{s-r}\dim_QH_s(C\otimes_RQ)= \\
  \sum_{s=r}^{2n+r}(-1)^{s-r}\dim_QC_s\otimes_RQ-\rnk\p^Q_{r-1}-\rnk\p^Q_{r+2n}
\end{multline}
and
 \begin{multline}\label{eqP}
 \sum_{s=r}^{2n+r}(-1)^{s-r}\dim_PH_s(C\otimes_hP)= \\
  \sum_{s=r}^{2n+r}(-1)^{s-r}\dim_PC_s\otimes_hP-\rnk\p^P_{r-1}-\rnk\p^P_{r+2n}
\end{multline} 

Compare the right hand sides of these equalities. 
The dimensions $\dim_PC_s\otimes_hP$, $\dim_QC_s\otimes_RQ$ are equal to
the rank of free $R$-module $C_s$.
Since, as it was shown above,  $\rnk\p^Q_i\ge\rnk\p^P_i$, the right hand
side of \eqref{eqP} is smaller than the right hands side of \eqref{eqP}.
\end{proof}

Probably, the simplest application of Lemma \ref{AlgLem} gives
well-known upper estimation of the Betti numbers with rational
coefficients by the Betti numbers with coefficients in a finite field. 
It follows from the universal coefficients formula. 

\subsubsection{Application to twisted homology}\label{sT.3.3} 
\begin{ATh}\label{EstTwHom}
Let $X$ be a finite cw-complex, and $\mu:H_1(X)\to\C^\times$ be a
homomorphism. If $\Im\mu\subset\C^\times$ generates a subring $R$ of $\C$ and
there is a ring homomorphism $h:R\to Q$, where $Q$ is a field, such that
$h\mu(H_1(X))=1$, then we can apply Lemma \ref{AlgLem} and get an upper
estimation for dimensions of twisted homology groups in terms of dimensions of
non-twisted ones.
\begin{equation}\label{twHomEst}
 \sum_{s=r}^{2n+r}(-1)^{s-r}\dim_QH_s(X;\C^\mu)\le
 \sum_{s=r}^{2n+r}(-1)^{s-r}\dim_PH_s(X;P) 
\end{equation}
\end{ATh}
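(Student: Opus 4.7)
The plan is to reduce Theorem \ref{EstTwHom} to the purely algebraic inequality of Lemma \ref{AlgLem} by exhibiting a single chain complex of free $R$-modules whose two specializations compute, respectively, the twisted and the untwisted cellular homologies of $X$.

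First I fix a finite CW structure on $X$, and for every cell choose an orientation, a characteristic map, and an interior basepoint. With these data the formula of \ref{sT.1.4} writes each entry of the twisted cellular boundary $\partial_p\colon C_p(X;\C^\mu)\to C_{p-1}(X;\C^\mu)$ as a finite signed sum $\sum_i \varepsilon_i\,\mu(\gamma_i)$ with $\gamma_i\in H_1(X)$. In particular every matrix entry lies in the subring $R\subset\C$ generated by $\mathrm{Im}\,\mu$.

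Next I would define $C^R_p$ to be the free $R$-module on the $p$-cells and take $\partial^R_p$ to be the $R$-linear map represented by the same matrix $\sum_i \varepsilon_i\,\mu(\gamma_i)$, now read in $R$. Since $R\hookrightarrow\C$ is injective and $\partial^2=0$ after extension of scalars to $\C$, one has $(\partial^R)^2=0$, so $C^R_*$ is a complex of finitely generated free $R$-modules. Two identifications then remain to be verified: that $C^R_*\otimes_R\C$ is chain isomorphic to $C_*(X;\C^\mu)$, so its homology is $H_*(X;\C^\mu)$; and that $C^R_*\otimes_h P\cong C_*(X;P)$, because the assumption $h\mu\equiv 1$ replaces each factor $\mu(\gamma_i)$ by $1$, turning $\sum_i\varepsilon_i\mu(\gamma_i)$ into the usual signed incidence number of cellular homology over $P$.

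Once both identifications are in place, an application of Lemma \ref{AlgLem} to $C^R_*$, with the role of the lemma's ambient field played by $\C\supset R$ and the role of its target field and homomorphism played by the given $P$ and $h$, yields inequality \eqref{twHomEst} at once. The main technical obstacle is the second identification: it requires a coherent choice of reference paths connecting cell basepoints so that the signs $\varepsilon_i$ arising in \ref{sT.1.4} really do reproduce the standard attaching-map degrees after $h\mu$ is trivialized. This is a routine but careful piece of bookkeeping, and no further input beyond Lemma \ref{AlgLem} is needed.
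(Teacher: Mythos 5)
Your proposal is correct and is precisely the intended application of Lemma~\ref{AlgLem}: the paper gives no separate proof beyond the phrase ``we can apply Lemma~\ref{AlgLem},'' and your reduction---choosing reference paths to trivialize the fibers over cell basepoints, observing that the twisted cellular boundary matrix then has entries $\sum_i\varepsilon_i\mu(\gamma_i)\in R$, lifting this to a complex $C^R_*$ of free $R$-modules, and identifying $C^R_*\otimes_R\C$ with $C_*(X;\C^\mu)$ and $C^R_*\otimes_h P$ with $C_*(X;P)$---is exactly the bookkeeping the paper takes for granted. (Note also that the theorem statement contains a small notational slip, writing $h\colon R\to Q$ where, matching Lemma~\ref{AlgLem} and the displayed inequality, the target of $h$ should be $P$; you have read it correctly.)
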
 

Here are several situations in which the assumptions of this theorem 
are fulfilled.

\subsubsection{Estimates by untwisted $\Z/p\Z$ Betti numbers}\label{sT.3.4} 
Let $H_1(X)$ be generated by $g$ and 
$\zeta=\mu(g)$ be an algebraic number.  Assume that $p$ is 
the minimal integer polynomial with relatively prime coefficients
which annihilates $\zeta$. Assume also that  $g(1)$ is divisible 
by a prime number $p$.
Then for $R$ we can take
$\Q[\zeta]\subset\C$, for $P$ the field $\Z/p\Z$, and for $h$ 
the ring homomorphism 
$\Q[\zeta]\to\Z/p\Z$ mapping $\zeta\mapsto1$.  

Here is a more general situation: Let $H_1(X)$ be generated by 
$g_1$,\dots $g_k$, and 
$\zeta_i=\mu(g_i)$ be an algebraic number for each $i$.  Assume that $p_i$ is 
the minimal integer polynomial with relatively prime coefficients
which annihilates $\zeta_i$. Assume also that the greatest common
divisor of $g_1(1)$,\dots, $g_k(1)$ is divisible by a prime number $p$.
Then for $R$ we can take
 $\Q[\zeta_1,\dots,\zeta_k]\subset\C$, for $P$ the field $\Z/p\Z$, and for $h$ 
the ring homomorphism 
$\Q[\zeta_1,\dots,\zeta_k]\to\Z/p\Z$ mapping $\zeta_i\mapsto1$ for all
$i$.   

\subsubsection{Estimates by rational Betti numbers}\label{sT.3.5} Let $H_1(X)$ 
be generated by $g$ and $\zeta=\mu(g)$ be transcendent.
Then for $R$ we can take the ring $\Z[\zeta,\Gz^{-1}]$, 
for $Q$ the field $\Q(\zeta)$, 
for $P$ the field $\Q$, and for $h$ the ring homomorphism $\Z[\zeta]\to\Q$
which maps $\zeta$ to 1. 

\subsubsection{The most general estimates}\label{sT.3.6} 
Let $H_1(X)$ be generated by $g_1$,\dots $g_k$ and $\Gz_i=\mu(g_i)$.  
Laurent polynomials with integer coefficients annihilated by  
$\Gz_1,\dots,\Gz_m$ form an ideal in the ring 
$\Z[t_1,t_1^{-1}\dots,t_m,t_m^{-1}]$. Let $p_1,\dots,p_k$ be generators of
this ideal. 
Let $d$ be the greatest common divisor of the integers 
$p_1(1,\dots,1)$, \dots, $p_k(1,\dots,1)$, if at least one of them is 
not 0. Otherwise, let $d=0$ 

In other words, consider the specialization homomorphism
$$S:\Z[t_1,t_1^{-1}\dots,t_m,t_m^{-1}]\to \C: t_i\mapsto\Gz_i.$$
Let $K$ be the kernel of $S$, and let $d$ be the generator of the ideal 
which is the image of $K$ under the homomorphism 
$$\Z[t_1,t_1^{-1}\dots,t_m,t_m^{-1}]\to\Z : t_i\mapsto 1.$$

Then for $R$ we can take the ring
$\Z[\Gz_1,\Gz_1^{-1},\dots,\Gz_k,\Gz_k^{-1}]$. For $Q$ we can take
the quotient field of $R$, but since both $Q$ and its quotient field
are contained in $\C$, let us take $Q=\C$.

If $d>1$, then we can take for $P$ the field $\Z/p\Z$ with any prime $p$
which divides $d$.  
If $d=0$, then let $P=\Q$. The case $d=1$ is the most misfortunate:
then our technique does not give any non-trivial estimate.
For $d>1$ or $d=0$ we have the inequality \eqref{twHomEst}.

\subsection{Twisted duality}\label{sT.4} 
\vspace{6pt}

\subsubsection{Cochains and cohomology}\label{sT.4.1} 
Cochain groups $C^p(X;\xi)$ (which are vector spaces over
$\C$) and cohomology $H^p(X;\xi)$ are defined similarly:   
$p$-cochain with coefficients in $\xi$ is a function assigning to a
singular simplex $f:T^p\to X$ an element of $\C_{f(c)}$, the 
fiber of $\xi$ over $f(c)$. 

This can be interpreted as the chain complex of the local coefficient
system $\Hom(\C,\xi)$ whose fiber over $x\in X$ is $\Hom_\C(\C,\C_x)$. 
More generally, for any local coefficient systems $\xi$ and $\eta$ on $X$ 
with fiber $\C$ there is a local coefficient system $\Hom(\xi,\eta)$
constructed fiber-wise with the parallel transport defined naturally in terms of
the parallel transports of $\xi$ and $\eta$. If the monodromy
representations of $\xi$ and $\eta$ are $\mu$ and $\nu$, respectively,
then the monodromy representation of $\Hom(\xi,\eta)$ is
$\mu^{-1}\nu:H_1(X)\to\C^\times:x\mapsto\mu^{-1}(x)\nu(x)$.

Similarly,  for any local coefficient systems $\xi$ and $\eta$ on $X$ 
with fiber $\C$  there is a local coefficient system $\xi\otimes \eta$.
If $\mu,\nu: H^1(X)\to\C^\times$ are homomorphisms, then 
$\C^\mu\otimes\C^\nu$ is the local coefficient system $\C^{\mu\nu}$ 
corresponding to the homomorphism-product 
$\mu\nu:H^1(X)\to\C^\times:x\mapsto \mu(x)\nu(x)$.
                                              
If $\nu=\mu^{-1}$ (that is $\mu(x)\nu(x)=1$ for any $x\in H^1(X)$), 
then $\C^\mu\otimes\C^\nu$ is the non-twisted coefficient system 
with fiber $\C$. 

In contradistinction to non-twisted case, there is no way to calculate 
$H_n(X;\xi\otimes\eta)$ in terms of $H_*(X;\xi)$ and $H_*(X;\eta)$.
Indeed, both $H_*(S^1;\C^\mu)$ and $H_*(S^1;\C^{\mu^{-1}})$ vanish, 
unless $\mu:H_1(S^1)\to\C^\times$ is trivial, but
$H_0(S^1;\C^\mu\otimes\C^{\mu^{-1}})=H_0(S^1;\C)=\C$. 

\subsubsection{Multiplications}\label{sT.4.2} 
Usual definitions of various cohomological and homological
multiplications are easily generalized to twisted homology. For this one
needs a bilinear pairing of the coefficient systems. (Recall that in the
case of non-twisted coefficient system a pairing of coefficient groups
also is needed.) For local coefficient systems $\xi$, $\eta$ and $\zeta$
with fiber $\C$ on $X$, a pairing $\xi\oplus\eta\to\zeta$ is a fiber-wise map
which is bilinear over each point of $X$. Given such a pairing, there
are pairings 
$$\smallsmile:H^p(X;\xi)\times H^q(X;\eta)\to H^{p+q}(X;\zeta),$$
 $$\smallfrown:H^{p+q}(X;\xi)\times H^q(X;\eta)\to H^{p}(X;\zeta),$$ 
 etc.

A pairing $\xi\oplus\eta\to\zeta$ of local coefficients systems can be 
factored through the universal pairing $\xi\oplus\eta\to\xi\otimes\eta$.

Since $\C^\mu\otimes\C^{\mu^{-1}}$ is a non-twisted coefficient system
with fiber $\C$, this gives rise to a non-singular pairing
$$
 C_p(X;\C^{\mu^{-1}})\otimes C^p(X;\C^\mu)\to \C 
$$
which induces a non-singular pairing 
$$
\smallfrown: H_p(X;\C^{\mu^{-1}})\otimes H^p(X;\C^\mu)\to \C
$$
Thus, the vector spaces $H_p(X;\C^{\mu^{-1}})$ and
$H^p(X;\C^\mu)$ are dual.

\subsubsection{Poincar\'e duality}\label{sT.4.3} 
Let $X$ be an oriented connected compact manifold of dimension $n$.
Then $H_n(X,\p X)$ is isomorphic to $\Z$ and the orientation is a 
choice of the isomorphism, or, equivalently, the choice of a generator
of $H_n(X,\p X)$. We denote the generator by $[X]$.

Let $\mu:H_1(X)\to\C^\times$ be a homomorphism. There are the
Poincar\'e-Lefschetz duality isomorphisms
$$[X]\smallfrown :H^p(X;\C^\mu)\to H_{n-p}(X,\p X;\C^{\mu}),
$$
$$ [X]\smallfrown :H^p(X,\p X;\C^\mu)\to H_{n-p}(X;\C^{\mu}) 
$$

Similarly to the case of non-twisted coefficients, there are 
non-singular pairings:
the cup-product pairing 
$$
\smallsmile:H^p(X;\C^\mu)\times H^{n-p}(X,\p X;\C^{\mu^{-1}})\to
H^n(X;\C)=\C
$$
and intersection pairing
\begin{equation}\label{bilin-ip}
\circ:H_p(X;\C^\mu)\times H_{n-p}(X,\p X;\C^{\mu^{-1}})\to \C
\end{equation}
However, the local coefficient systems of the homology or cohomology
groups involved in a pairing are different,
unless $\Im\mu\subset\{\pm1\}$. 

\subsubsection{Conjugate local coefficient systems}\label{sT.4.4} 
Recall that for vector spaces $V$ and
$W$ over $\C$ a map $f:V\to W$ is called semi-linear if $f(a+b)=f(a)+f(b)$
for any $a,b\in V$ and $f(za)=\overline zf(a)$ for $z\in\C$ and $a\in
V$. This notion extends obviously to fiber-wise maps of complex vector
bundles. If $\xi$ and $\eta$ local coefficient systems of the type that
we consider, then fiber-wise semi-linear bijection $\xi\to\eta$ commuting 
with all the transport maps is called a {\sfit semi-linear
equivalence\/} between $\xi$ and $\eta$.

For any local coefficient system $\xi$ with fiber $\C$ on $X$ there
exists a unique local coefficient system on $X$ which is
semi-linearly equivalent to  $\xi$. It is denoted by $\overline\xi$ and
called {\sfit conjugate\/} to $\xi$.
If $\xi=\C^\mu$, then $\overline\xi$ is $\C^{\overline\mu}$, where
$\overline\mu(x)=\overline{\mu(x)}$ for any $x\in H_1(X)$.

\subsubsection{Unitary local coefficient systems}\label{sT.4.5} 
A homomorphism $\mu:H_1(X)\to\C^\times$ is called {\sfit unitary\/} if
$\Im\mu\subset S^1=U(1)=\{z\in\C\mid |z|=1\}$. In $S^1$ the inversion
$z\mapsto z^{-1}$ coincides with the complex conjugation: if $|z|=1$, then
$z^{-1}=\overline z$. Therefore if $\mu:H_1(X)\to\C^\times$ is unitary, 
then $\overline{\C^\mu}=\C^{\mu^{-1}}$ and there exists a 
{\sfit semi-linear\/ } equivalence $\C^\mu\to\C^{\mu^{-1}}$. 

This semi-linear equivalence induces semi-linear equivalence
$$H_{k}(X;\C^\mu)\to H_k(X;\C^{\mu^{-1}})$$ 
and similar semi-linear equivalences in cohomology and relative 
homology and cohomology.

Combining a semi-linear isomorphism 
$$H_{n-p}(X,\p X;\C^\mu)\to H_{n-p}(X,\p X;\C^{\mu^{-1}})$$ 
of this kind with the intersection
pairing \eqref{bilin-ip} we get a {\sfit sesqui-linear \/} pairing
\begin{equation}\label{ssqlin-ip}
 \circ:H_p(X;\C^\mu)\times H_{n-p}(X,\p X;\C^{\mu})\to \C 
\end{equation}
(Sesqui-linear means that it is linear on the first variable, and
semi-linear on the second one.) This pairing is non-singular, because
the bilinear pairing  \eqref{bilin-ip} is non-singular, and  \eqref{ssqlin-ip}
differs from it by a semi-linear equivalence on the second variable.

\subsubsection{Intersection forms}\label{sT.4.6} 
Let $X$ be an oriented connected compact smooth manifold of even 
dimension $n=2k$ and $\mu:H_1(X)\to\C^\times$ be a unitary homomorphism.
Combining the relativisation homomorphism 
$$
H_{n-p}(X;\C^{\mu})\to H_{n-p}(X,\p X;\C^\mu)
$$
with the pairing \eqref{ssqlin-ip} for $p=k$  define sesqui-linear form
\begin{equation}\label{ssqlin-if}
\circ:H_k(X;\C^\mu)\times H_k(X;\C^\mu)\to\C
\end{equation}
It is called the {\sfit intersection form\/} of $X$. 

If $k$ is even, this form is {\sfit Hermitian\/}, that is $\Ga\circ\Gb=\overline{\Gb\circ\Ga}$.
If $k$ is odd, it is {\sfit skew-Hermitian\/}, that is
$\Ga\circ\Gb=-\overline{\Gb\circ\Ga}$. 

The difference between Hermitian and skew-Hermitian forms is not as deep
as the difference between symmetric and skew-symmetric bilinear forms.
Multiplication by $i=\sqrt{-1}$ turns a skew-Hermitian form into a
Hermitian one, and the original form can be recovered. In order to
recover, just multiply the Hermitian form by $-i$. 

The intersection form \eqref{ssqlin-if} may be singular. Its radical,
that is the orthogonal complement of the whole $H_k(X;\C^\mu)$, is the
kernel of the relativisation homomorphism  
$H_k(X;\C^{\mu})\to H_k(X,\p X;\C^\mu)$.  
It can be described also as the image of the inclusion homomorphism 
$$H_k(\p X;\C^{\mu\inc_*})\to H_k(X;\C^\mu),$$ where $\inc_*$ is the
inclusion homomorphism $H_1(\p X)\to H_1(X)$.

\subsubsection{Twisted signatures and nullities}\label{sT.4.7} 
As well-known for any Hermitian form on a finite-dimensional space
$V$ there exists an orthogonal basis in which the form is represented by 
a diagonal matrix. The diagonal entries of the matrix are real.
The number of zero diagonal entries is called the {\sfit nullity\/}, 
and the difference between the number of positive and negative
entries is called the {\sfit signature\/} of the form.
These numbers do not depend on the basis.

For a skew-Hermitian form by nullity and signature one means the nullity
and signature of the Hermitian form obtained by multiplication of the
skew-Hermitian form by $i$.

For a compact oriented $2k$-manifold $X$ and a homomorphism $\mu:H_1(X)\to\C$
the signature and nullity of the intersection form
$$\circ:H_k(X;\C^\mu)\times H_k(X;\C^\mu)\to\C  $$
are denoted by $\Gs_\mu(X)$ and $n_\mu(X)$, respectively, and called the
{\sfit twisted\/} signature and nullity of $X$.

The classical theorems about the signatures of the symmetric intersection 
forms of oriented compact $4k$-manifolds are easily generalized to
twisted signatures:

\begin{ATh}{\bfit Additivity of Signature.}\label{AddOfSign}
Let $X$ be an oriented compact manifold of even dimension.
If $A$ and $B$ are its compact submanifolds of the same dimension
such that $A\cup B=X$, $\Int A\cap\Int B=\varnothing$ and $\p(A\cap
B)=\varnothing$, then for any $\mu:H_1(X)\to\C^\times$ 
$$\Gs_\mu(X)=\Gs_{\mu\inc_*}(A)+\Gs_{\mu\inc_*}(B)$$
where $\inc$ denotes an appropriate inclusion.
\end{ATh}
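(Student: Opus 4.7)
The plan is to adapt the classical Novikov additivity argument to the twisted setting. Set $C = A \cap B$; the hypothesis $\p(A\cap B)=\varnothing$ makes $C$ a closed $(n-1)$-submanifold contained in $\p A \cap \p B$. I denote by $\mu$ also the pullbacks to $H_1$ of $A$, $B$, $C$ induced by inclusion, write $V = H_k(X;\C^\mu)$ where $n = 2k$, and let $R \subset V$ be the radical of the intersection form $\circ$, so that $\circ$ descends to a non-degenerate (skew-)Hermitian form on $V/R$ with signature $\Gs_\mu(X)$.

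The first step is to set up the Mayer--Vietoris sequence of the triad $(X;A,B)$ with coefficients in $\C^\mu$,
\[
\cdots \to H_k(C;\C^\mu) \to H_k(A;\C^\mu)\oplus H_k(B;\C^\mu) \xrightarrow{\phi} V \xrightarrow{d} H_{k-1}(C;\C^\mu)\to\cdots,
\]
and define $V_A,V_B \subset V$ as the images under $\phi$ of the two summands. The key geometric observation is that, using a collar of $\p A$, every class in $H_k(A;\C^\mu)$ has a representative cycle supported in $\Int A$, and similarly for $B$; this uses only homotopy invariance of twisted homology. Since $\Int A \cap \Int B = \varnothing$, a cycle in $\Int A$ and a cycle in $\Int B$ never meet, so the geometric definition of $\circ$ gives $V_A \perp V_B$ with respect to the intersection form.

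The second and crucial step is to prove that $V_A + V_B + R = V$, i.e., that the image of $\phi$ fills $V$ modulo the radical. The cokernel of $\phi$ injects via $d$ into $H_{k-1}(C;\C^\mu)$. Using the Poincar\'e--Lefschetz duality of \ref{sT.4.3} for $X$ and for $C$, combined with the long exact sequence of the pair $(X,\p X)$, I would identify this cokernel with a subspace of classes representable by cycles in $\p X = (\p A \cup \p B)\setminus\Int C$; such classes lie in $R$ by definition. Combined with the orthogonality, this gives an orthogonal decomposition of $V/R$ into the images of $V_A$ and $V_B$, so
\[
\Gs_\mu(X) = \Gs(\circ|_{V_A}) + \Gs(\circ|_{V_B}).
\]

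Finally, I would identify each summand with the corresponding twisted signature. For $\Ga_1,\Ga_2\in V_A$ represented by cycles in $\Int A$, the intersection number $\Ga_1 \circ \Ga_2$ computed in $X$ equals the intersection computed inside $A$. The radical of $\circ|_{V_A}$ likewise coincides with the radical of the intersection form of $A$, both consisting of classes arising from $H_k(\p A;\C^\mu)$. Thus $\Gs(\circ|_{V_A}) = \Gs_{\mu\iota_*}(A)$, and symmetrically for $B$. The main obstacle is the second step: combining Mayer--Vietoris with Poincar\'e--Lefschetz duality in twisted coefficients to show that cycles with nontrivial Mayer--Vietoris boundary in $C$ can be adjusted modulo $V_A+V_B$ to live in $\p X$. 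The remaining orthogonality and matching steps are routine transcriptions of the untwisted Novikov additivity proof.
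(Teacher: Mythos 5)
The paper records this theorem without proof, presenting it as a routine twisted-coefficient analogue of classical Novikov additivity; so there is no argument in the paper to compare against, and your proposal must stand on its own. It does not: there is a genuine gap at the step you yourself flag as the main obstacle. The claim $V_A+V_B+R=V$ is false, and no amount of Poincar\'e--Lefschetz duality will prove it, because it already fails in the untwisted, closed case where $R=0$. Take $X=S^2\times S^2$ split along $C=S^2\times S^1$ into $A=S^2\times D^2_+$ and $B=S^2\times D^2_-$. Then $H_2(X;\C)\cong\C^2$, while $V_A$ and $V_B$ are each spanned by $[S^2\times\text{pt}]$, so $V_A+V_B$ is one-dimensional; the cokernel of $\phi$ injects nontrivially into $H_1(C;\C)\cong\C$, and there is no $\p X$ to absorb it. So the ``orthogonal decomposition of $V/R$ into the images of $V_A$ and $V_B$'' you invoke simply does not exist.

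What duality actually yields is the weaker inclusion $(V_A+V_B)^\perp\subseteq V_A+V_B$, and that is exactly what the classical Novikov argument uses. Concretely, the Mayer--Vietoris boundary $d\colon H_k(X;\C^\mu)\to H_{k-1}(C;\C^\mu)$ is adjoint, up to sign, to $i_*\colon H_k(C;\C^\mu)\to H_k(X;\C^\mu)$ with respect to the intersection pairings on $X$ and on the closed $(2k-1)$-manifold $C$; since $i_*H_k(C;\C^\mu)\subseteq V_A$, any $x\perp(V_A+V_B)$ satisfies $d(x)\circ y=\pm\,x\circ i_*y=0$ for all $y$, whence $d(x)=0$ and $x\in\Ker d=V_A+V_B$. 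To convert this into the signature identity you then need the algebraic lemma behind Novikov additivity: if $U\subseteq V$ and $U^\perp\subseteq U$ (orthogonal complement taken in $V$ with respect to the possibly degenerate form), then $\Gs(\circ)=\Gs(\circ|_U)$. Applying it with $U=V_A+V_B$ and combining with your correct orthogonality $V_A\perp V_B$ gives $\Gs_\mu(X)=\Gs(\circ|_{V_A})+\Gs(\circ|_{V_B})$; your final identifications of these with $\Gs_{\mu\inc_*}(A)$ and $\Gs_{\mu\inc_*}(B)$ are fine. The missing ingredient is this algebraic lemma, which replaces --- rather than follows from --- the surjectivity you claimed.
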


\begin{ATh}{\bfit Signature of Boundary.}\label{VanSign}
Let $X$ be an oriented compact manifold of odd dimension. Then 
$\Gs_{\mu\inc_*}(\p X)=0$
for any
$\mu:H_1(X)\to\C^\times$.
\end{ATh}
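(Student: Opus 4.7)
The plan is to exhibit a Lagrangian subspace in $H_k(\p X;\C^{\mu\inc_*})$, where $\dim X=2k+1$, since a (skew-)Hermitian form on a finite-dimensional space that admits a Lagrangian (totally isotropic subspace of half the dimension of the non-degenerate quotient) automatically has signature zero. The candidate Lagrangian is
\[
L=\ker\bigl(i_*\colon H_k(\p X;\C^{\mu\inc_*})\to H_k(X;\C^\mu)\bigr),
\]
which, by exactness of the homology sequence of $(X,\p X)$, equals the image of the connecting homomorphism $\p\colon H_{k+1}(X,\p X;\C^\mu)\to H_k(\p X;\C^{\mu\inc_*})$.

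First I would verify that $L$ is totally isotropic. The key is the naturality of the intersection pairing between pair and boundary: for $A\in H_{k+1}(X,\p X;\C^\mu)$ and $b\in H_k(\p X;\C^{(\mu\inc_*)^{-1}})$ one has $(\p A)\cdot b=\pm\, A\cdot i_*(b)$ in $\C$, where the left-hand intersection takes place in $\p X$ and the right-hand one in $X$ (using Poincar\'e--Lefschetz duality to pair middle-dimensional classes of $X$ with relative $(k{+}1)$-classes). Given $a,a'\in L$, write $a=\p A$ and let $\overline{a'}$ denote the image of $a'$ under the semi-linear equivalence $\C^{\mu\inc_*}\to\C^{(\mu\inc_*)^{-1}}$ provided by \ref{sT.4.5} (this is where unitarity of $\mu\inc_*$ is used). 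Since $i_*a'=0$, the same semi-linear equivalence shows that the corresponding inclusion sends $\overline{a'}$ to $0$, whence $a\circ a'=(\p A)\cdot\overline{a'}=\pm A\cdot 0=0$. Thus $L$ is isotropic for the sesquilinear intersection form on $\p X$.

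Next I would show $\dim_\C L=\tfrac12\dim_\C H_k(\p X;\C^{\mu\inc_*})$. By exactness of the pair sequence at $H_k(\p X)$, $\dim L+\rnk i_*=\dim H_k(\p X;\C^{\mu\inc_*})$, so it suffices to prove $\dim L=\rnk i_*$. Using Poincar\'e--Lefschetz duality (\ref{sT.4.3}) we have the commutative square
\[
\begin{CD}
H_{k+1}(X,\p X;\C^\mu) @>\p>> H_k(\p X;\C^{\mu\inc_*})\\
@V\cong VV @VV\cong V\\
H^k(X;\C^\mu) @>i^*>> H^k(\p X;\C^{\mu\inc_*})
\end{CD}
\]
(a standard naturality property of the cap product with the fundamental class, extended to twisted coefficients). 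Hence $\rnk\p=\rnk i^*$, and $\rnk i^*=\rnk i_*$ by universal-coefficient duality combined with the semi-linear equivalence of $\C^{\mu\inc_*}$ with $\C^{(\mu\inc_*)^{-1}}$. Therefore $\dim L=\rnk\p=\rnk i_*=\tfrac12\dim H_k(\p X;\C^{\mu\inc_*})$.

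Finally, the radical $R$ of the intersection form is contained in $L=L^\perp$, and the form descends to a non-degenerate (skew-)Hermitian form on $H_k(\p X;\C^{\mu\inc_*})/R$ with Lagrangian $L/R$; hence its signature vanishes, and by definition $\Gs_{\mu\inc_*}(\p X)=0$. The principal technical obstacle is the bookkeeping of coefficient systems: both naturality of the intersection pairing (Step~1) and the identification $\p\leftrightarrow i^*$ under Poincar\'e--Lefschetz duality (Step~2) involve juggling $\C^\mu$, $\C^{\mu^{-1}}$, $\C^{\mu\inc_*}$ and the semi-linear equivalence that makes the sesquilinear form Hermitian; once these identifications are made precise in the appendix's framework, the classical proof carries over verbatim.
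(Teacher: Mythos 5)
The paper does not give a proof of Theorem~\ref{VanSign}: it is stated, together with Theorem~\ref{AddOfSign}, as one of the ``classical theorems \dots\ easily generalized to twisted signatures,'' with the generalization left to the reader. Your proposal correctly reconstructs the standard Thom argument --- take $L=\ker\bigl(i_*\colon H_k(\partial X;\C^{\mu\inc_*})\to H_k(X;\C^\mu)\bigr)$, show it is totally isotropic via the adjunction $(\partial A)\circ b=\pm A\circ i_*(b)$, and get the half-dimension count from the Poincar\'e--Lefschetz square identifying $\partial$ with $i^*$ --- and you correctly flag the real work as tracking the coefficient systems $\C^{\mu}$, $\C^{\mu^{-1}}$, $\C^{\mu\inc_*}$ and the semi-linear equivalence of \ref{sT.4.5} through these naturality statements. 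One small simplification you could make: because $\partial X$ is a \emph{closed} $2k$-manifold, the description of the radical in \ref{sT.4.6} (it is $\ker\bigl(H_k\to H_k(\cdot,\partial)\bigr)$, which is trivial when the boundary is empty) shows the intersection form on $H_k(\partial X;\C^{\mu\inc_*})$ is already non-degenerate, so the final paragraph about passing to the quotient by the radical is not needed; $L$ is outright a Lagrangian and the signature vanishes immediately.
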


\end{document}